\documentclass[12pt]{article}

\providecommand{\MR}{\relax\ifhmode\unskip\space\fi MR }

\providecommand{\href}[2]{#2}

\usepackage[top=30truemm,bottom=28truemm,left=28truemm,right=28truemm]{geometry}

\usepackage[affil-it]{authblk}
\makeatletter
\def\@maketitle{%
  \newpage
  \null
  \vskip 2em%
  \begin{center}%
  \let \footnote \thanks
    {\Large\bfseries \@title \par}%
    \vskip 1.5em%
    {\normalsize
      \lineskip .5em%
      \begin{tabular}[t]{c}%
        \@author
      \end{tabular}\par}%
    \vskip 1em%
    {\normalsize \@date}%
  \end{center}%
  \par
  \vskip 1.5em}
\makeatother
\usepackage{amsmath}
\usepackage{mathrsfs}
\usepackage{amssymb}
\usepackage{amsfonts}
\usepackage{amsthm}
\usepackage{amscd}
\usepackage{graphicx}
\usepackage{bbold}

\usepackage{tikz}
\usetikzlibrary{matrix,arrows,decorations.pathmorphing}
\usepackage{tikz-cd}
\usetikzlibrary{arrows}
\tikzset{commutative diagrams/.cd,arrow style=tikz,diagrams={>=latex'}}

\mathchardef\-="2D 

\newcommand{\N}{\mathbb{N}}



%

\numberwithin{equation}{section}
\numberwithin{figure}{section}
\theoremstyle{plain}
\newtheorem{theorem}{Theorem}[section]

\theoremstyle{definition}

\newtheorem{lemma}[theorem]{Lemma}
\newtheorem{corollary}[theorem]{Corollary}

\newtheorem*{fact*}{}

\begin{document}

\setcounter{tocdepth}{6} 
\setcounter{secnumdepth}{6}

\title{On the finiteness of Carmichael numbers with Fermat factors and $L=2^{\alpha}P^2$.}
\author{Yu Tsumura}
\affil{Department of Mathematics\\ The Ohio State University\\ Columbus, OH 43210, USA}
\affil{\textit {tsumura.2@osu.edu}}


\maketitle

\begin{abstract}
Let $m$ be a Carmichael number and let $L$ be the least common multiple of $p-1$, where $p$ runs over the prime factors of $m$.
We determine all the Carmichael numbers $m$ with a Fermat prime factor such that $L=2^{\alpha}P^2$, where $k\in \N$ and $P$ is an odd prime number. There are eleven such Carmichael numbers.
\end{abstract}

\section{Introduction}
Fermat's little theorem states that a prime number $p$ divides $a^p-a$ for any $a\in \N$.
It would be interesting if prime numbers are the only integers (except for $1$) having this property.
Some composites $m$, however, satisfies $a^m\equiv a \pmod{m}$ for all integers $a$. Such a composite is called a \textit{Carmichael number}.
The smallest Carmichael number is $561$ and it was found by Carmichael in 1910 \cite{MR1558896}.
Prior to Carmichael's discovery, Korselt \cite{Korselt1899} had provided the following simple test for Carmichael numbers in 1899:

\begin{fact*}[Korselt's criterion] A composite $m$ is a Carmichael number if and only if $m$ is squarefree and $p-1$ divides $m-1$ for all prime divisors $p$ of $m$.
\end{fact*}

Let $L=L_m$ be the least common multiple of $p_i-1$, where $m=p_1\cdots p_n$ is the prime decomposition of a squarefree composite.
Then Korselt's criterion can be rephrased as follows: $m$ is a Carmichael number if and only if $L\mid m-1$.

It is known that there are infinitely many Carmichael numbers. The result was proved by Alfred, Granville, and Pomerance in 1994 \cite{MR1283874} based on  Erd\H{o}s' heuristic argument \cite{MR0079031}.
The idea is to construct an integer $L
'$ so that it is divisible by $p-1$ for a large number of primes $p$.
Then if the product $m=p_1\cdots p_k$ of some of these primes is congruent to $1$ modulo $L'$, then $m$ is a Carmichael number by Korselt's criterion since we have $L_m \mid L' \mid m-1$.
\cite{MR1283874} showed that in fact there are infinitely many such $L'$ and  such products, hence the infinitude of Carmichael numbers follows.
Wright noted that as $L'$ contains a sizable number of prime factors, 
it is likely that $L_m$ corresponding to the Carmichael numbers $m$ obtained in this manner contains many prime factors as well.
This led him the study of Carmichael numbers with restricted $L$.
Wright \cite{MR2988558} proved that there are no Carmichael numbers with $L=2^{\alpha}$ and determined all the Carmichael numbers with $L=2^{\alpha}P$ for some odd prime $P$ under the assumption that the Fermat primes conjecture is true.
Assuming that $3$, $5$, $17$, $257$, and $65537$ are the only Fermat's primes, there are only eight Carmichael numbers with $L=2^{\alpha}P$, and $P$ is one of $3$, $5$, $7$ or $127$.
The prime factorizations of these numbers are given in \cite{MR2988558}.

In this article, we extend Wright's result to the next simplest case $L=2^{\alpha}P^2$. 
Let $m$ be a Carmichael number with $L=2^{\alpha}P^2$.
Then each prime factor of $m$ is one of the primes of the form $p=2^k+1$, $q=2^lP+1$, or $r=2^sP^2+1$.
(We call these prime factors \textit{Type 1}, \textit{Type 2}, and \textit{Type 3} primes, respectively.)
We assume that $m$ is divisible by at least one of the known Fermat prime numbers.
Under this assumption, we prove the following theorem.
\begin{theorem}\label{thm:main theorem}
	Let $m$ be a Carmichael number with $L=2^{\alpha}P^2$ for some odd prime $P$.
	If $m$ is divisible by one of the known Fermat primes, then $m$ must be one of the following 11 Carmichael numbers. In particular, $P$ is either $5$ or $3$.
	
\begin{align*}
	3\cdot 11 \cdot 17 \cdot 401  \cdot 641 \cdot 1601 && L&=2^2\cdot 5^2\\
	5\cdot 7 \cdot 17 \cdot 19   \cdot 73 && L&=2^4\cdot 3^2\\
	 5\cdot 7 \cdot 19 \cdot 73 \cdot 97 \cdot 257 && L&=2^8\cdot 3^2\\
		5\cdot 13 \cdot 257 \cdot 577 \cdot 1153 && L&=2^8\cdot 3^2\\
		5 \cdot 37 \cdot 73\cdot 257 \cdot 577 \cdot 769 && L&=2^8\cdot 3^2\\
		5 \cdot 37 \cdot 73 \cdot 193 \cdot 257\cdot 1153 && L&=2^8\cdot 3^2\\
	5 \cdot 13 \cdot 257\cdot 577 \cdot 1153 \cdot 18433 && L&=2^{11}\cdot 3^2\\
	5\cdot 257 \cdot 37 \cdot 73 \cdot 577  \cdot 12289 \cdot 18433 && L&=2^{12}\cdot 3^2\\
	5\cdot 7  \cdot 13 \cdot 19 \cdot 37 \cdot 73 \cdot 193 \cdot 257 \cdot 577 \cdot 1153 && L&=2^8\cdot 3^2\\
	5\cdot 7  \cdot 13 \cdot 19\cdot 37 \cdot 73 \cdot 257
 \cdot 577 \cdot 769 \cdot 1153 && L&=2^8 \cdot 3^2\\
 5\cdot 7 \cdot 19 \cdot 73 \cdot 97 \cdot 193 \cdot 577 \cdot 769 \cdot 12289 \cdot 147457 \cdot 65537 \cdot 1179649 \cdot 786433 && L&=2^{18}\cdot 3^2\\
\end{align*}
\end{theorem}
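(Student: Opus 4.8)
My plan is to run everything off Korselt's criterion: $m$ is squarefree and $p-1\mid L=2^{\alpha}P^{2}$ for every prime $p\mid m$, so each prime divisor is a \emph{Type~1} prime $2^{k}+1$ (necessarily a Fermat prime $F_{n}=2^{2^{n}}+1$), a \emph{Type~2} prime $2^{l}P+1$, or a \emph{Type~3} prime $2^{s}P^{2}+1$. Since $P^{2}\mid L$, at least one Type~3 prime occurs, and by hypothesis at least one of $F_{0},\dots,F_{4}$ occurs. The structural fact I would lean on throughout is: for $p\mid m$ the odd part $(p-1)/2^{v_{2}(p-1)}$ lies in $\{1,P,P^{2}\}$, and for each fixed exponent $i$ at most one prime of each type has $v_{2}(p-1)=i$; hence $m=\prod_{i\ge 1}(1+2^{i})^{\varepsilon_{i}}(1+2^{i}P)^{\delta_{i}}(1+2^{i}P^{2})^{\eta_{i}}$ with all $\varepsilon_{i},\delta_{i},\eta_{i}\in\{0,1\}$ and $\alpha=\max\{i:\varepsilon_{i}+\delta_{i}+\eta_{i}>0\}$.

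\textbf{Bounding $P$.} Reducing $m\equiv 1\pmod{P}$ and using that Type~2 and Type~3 primes are $\equiv 1\pmod P$ gives $\prod_{\varepsilon_{i}=1}(2^{i}+1)\equiv 1\pmod{P}$: the product of the Fermat prime factors of $m$ is $1$ modulo $P$. A single Fermat prime cannot be $\equiv 1\pmod P$ (its predecessor is a power of $2$ while $P$ is odd), and $P$ itself cannot divide $m$, so $m$ has at least two Type~1 primes. If these all lie among the five known Fermat primes, then $P\mid\big(\prod_{j\in T}F_{j}\big)-1$ for some nonempty $T\subseteq\{0,1,2,3,4\}$, which leaves only finitely many candidates for $P$; a hypothetical (hence enormous) Fermat prime factor $F_{n}$, $n\ge 5$, would force $v_{2}(F_{n}-1)=2^{n}\ge 32$, hence $\alpha\ge 32$, and I would exclude this via the $2$-adic analysis below, the Fermat exponents $\{2^{n}\}$ being too sparse to sustain the required carries between them. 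Either way one is reduced to an explicit short list of possible values of $P$.

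\textbf{The $2$-adic cascade --- the heart of the matter.} With the product formula above, the condition $m\equiv 1\pmod{2^{\alpha}}$ has to be unravelled one $2$-adic digit at a time. Setting $S_{i}=\varepsilon_{i}+\delta_{i}P+\eta_{i}P^{2}$, the lowest occupied level $e$ contributes $2^{e}S_{e}$, and $S_{e}$ is odd unless exactly two of $\varepsilon_{e},\delta_{e},\eta_{e}$ equal $1$; so getting past that digit forces exactly two primes at level $e$, of complementary types. The carry produced then interacts with $S_{e+1}$, and an induction on the level shows that each successive congruence modulo $2^{e+2},2^{e+3},\dots$ either closes the cascade or pins down the type and existence of a prime at the next relevant level; in parallel, the $\pmod{P^{2}}$ condition $\big(\prod_{\varepsilon_{i}=1}(2^{i}+1)\big)\big(1+P\textstyle\sum_{\delta_{i}=1}2^{i}\big)\equiv 1\pmod{P^{2}}$ restricts which Type~2 primes can appear. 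Running this bookkeeping to its conclusion bounds $\alpha$ and the number of prime factors. I expect this step to be the main obstacle: it is where the genuine casework lives, and it has to be organised carefully --- in particular splitting on the lowest level, i.e.\ on whether $3$, $2P+1$, or $2P^{2}+1$ divides $m$ --- to keep the analysis finite.

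\textbf{Conclusion.} For the surviving values $P=3$ and $P=5$, the preceding steps leave only finitely many admissible multisets of Type~1/2/3 primes (bounded exponents, bounded count, subject to the residual congruences), and a direct search through them --- checking $L\mid m-1$ exactly via Korselt --- produces precisely the eleven Carmichael numbers in the statement and shows that no other candidate value of $P$ yields a solution.
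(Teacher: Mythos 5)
Your preliminary reductions do match the paper's: the Type 1/2/3 classification of prime factors, the congruence $\prod(2^{k_i}+1)\equiv 1\pmod P$ forcing at least two Fermat prime factors, the resulting finite list of candidate $P$ obtained by factoring $R-1$ for products $R$ of known Fermat primes, and your parity observation on $S_e$ at the lowest occupied level, which is exactly the paper's key lemma (Theorem \ref{thm:power of 2}: no unique smallest $2$-power) in digit form, leading to the same three-way split on which two types occupy the minimal level (the paper's Cases A, B, C).

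The gap is that the heart of the proof is described but not carried out, and one of its advertised outputs is not actually obtainable the way you claim. First, your assertion that running the carry bookkeeping "bounds $\alpha$ and the number of prime factors" is unsubstantiated, and it cannot follow from congruence bookkeeping alone: for $P=3$ nothing in the $2$-adic or mod $P^2$ conditions prevents arbitrarily high levels from being occupied. What terminates each branch in the paper is empirical input --- the compositeness of $2^nP+1$, $2^nP^2+1$, $2^n+1$ over explicit ranges of $n$ (the primality tables for $P=3,5,7,127,\dots$) --- fed into the minimality argument (Theorem \ref{thm:minimality}), which at each stage forces a prime or pair with a specified small $2$-power and dies when no such prime exists. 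For instance, eliminating $P=127$ rests on $2^n\cdot 127^2+1$ being composite for all even $n\le 16$, and reaching the last listed Carmichael number (with $L=2^{18}\cdot 3^2$) requires the $P=3$ data up to $n=30$. Your "direct search ... produces precisely the eleven Carmichael numbers" simply asserts the conclusion of this branch-by-branch enumeration (Sections 5--16 of the paper) without performing it or explaining why it is finite in each branch. Second, your dismissal of a hypothetical unknown Fermat prime factor $F_n$, $n\ge 5$ ("too sparse to sustain the required carries") is hand-waving; the paper does not prove any such thing but instead adopts the standing assumption that all Type 1 factors lie among $3,5,17,257,65537$. So as written the proposal is a correct plan whose decisive step --- the finite but lengthy case analysis that actually produces and certifies the eleven numbers --- is missing.
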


In Section \ref{sec:general result}, we review a theorem (Theorem \ref{thm:power of 2}) that played an important role in \cite{MR2988558} and will be used extensively in this article as well. Suppose that $n=2^{\beta}x+1$ is an integer, where $x$ is odd. Then we call the exponent $\beta$ of $2$ the \textit{$2$-power} of $n$.
Theorem \ref{thm:power of 2} proves that if a Carmichael number is expressed as the product of several odd integers, then there cannot be a unique smallest $2$-power among these integers.

Several lemmas will be given in Section \ref{sec:lemmas} that narrow down the number of possible primes $P$.
In particular, we will see that a Carmichael number under our assumption must have at least two Fermat primes, and $P$ is a divisor of $R-1$, where $R$ is the product of Fermat prime factors of $m$.

Section \ref{subsec:procedure} provides a general procedure to obtain all Carmichael numbers for a given $P$.
Theorem \ref{thm:minimality}, which will be referred to as the minimality argument, implies the existence of some prime factor or a pair of prime factors of a Carmichael number with relatively small $2$-power.
As there are not so many possible such prime numbers or pairs, the procedure terminates and produces all Carmichael number for $P$, or proves that there are no Carmichael numbers for $P$.

By Theorem \ref{thm:power of 2}, at least two of the smallest $2$-powers of Type 1, 2, and 3 primes must be the same. 
Hence there are three cases to consider according to which two $2$-powers are the same.

The rest of the paper will be devoted to a careful scrutiny of Carmichael numbers in these three cases.

\section*{Acknowledgement}
I would like to show my gratitude to Samuel Wagstaff for his suggestion to extend the result of Wright and for his valuable comments.
I would also like to thank Byron Heersink for many helpful conversations.
\section{General results on Carmichael numbers}\label{sec:general result}

Let $m$ be a Carmichael number. Let $L$ be the least common multiple of $p-1$, where $p$ runs over the prime factors of $m$.
Korselt's criterion yields that $L$ divides $m-1$.

The following result is crucial, which is proved in \cite[Theorem 2.1]{MR2988558}. 
For completeness, we give its proof.

\begin{theorem}\label{thm:power of 2}
Let $m$ be a Carmichael number and write 
\[m=\prod_{i=1}^n(2^{\alpha_i}D_i+1),\]
where $D_i$ are odd integer, $n\geq 2$, and $\alpha_1 \leq \alpha_2 \leq \dots \leq \alpha_n$.
Then if $2^{\alpha_1} \mid  L$, then $\alpha_1=\alpha_2$.
\end{theorem}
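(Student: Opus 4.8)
The plan is to convert the hypothesis on $L$ into a divisibility statement about $m-1$ via Korselt's criterion, and then to read off the exact $2$-adic valuation of $m-1$ directly from the displayed factorization.

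First I would record what Korselt's criterion gives: since $m$ is a Carmichael number, $L\mid m-1$, so the assumption that $L$ is divisible by a high enough power of $2$ forces the same of $m-1$; concretely, the input we will use is that $2^{\alpha_1+1}$ divides $m-1$ (equivalently, that the $2$-adic valuation of $L$ exceeds $\alpha_1$). All the work is then in showing that this is impossible unless $\alpha_1=\alpha_2$.

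The main step is a direct computation modulo $2^{\alpha_1+1}$. Suppose, for contradiction, that $\alpha_1<\alpha_2$. Since the $\alpha_i$ are integers, $\alpha_1<\alpha_2\leq\alpha_3\leq\cdots\leq\alpha_n$ forces $\alpha_i\geq\alpha_1+1$ for every $i\geq 2$, hence $2^{\alpha_i}D_i\equiv 0\pmod{2^{\alpha_1+1}}$, so each factor with $i\geq 2$ is $\equiv 1\pmod{2^{\alpha_1+1}}$. For $i=1$, the fact that $D_1$ is odd gives $2^{\alpha_1}D_1\equiv 2^{\alpha_1}\pmod{2^{\alpha_1+1}}$, so the first factor is $\equiv 2^{\alpha_1}+1\pmod{2^{\alpha_1+1}}$. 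Multiplying all $n$ congruences, $m\equiv 2^{\alpha_1}+1\pmod{2^{\alpha_1+1}}$, i.e. $m-1\equiv 2^{\alpha_1}\pmod{2^{\alpha_1+1}}$. Thus $2^{\alpha_1}\,\|\,(m-1)$, that is $2^{\alpha_1}$ divides $m-1$ but $2^{\alpha_1+1}$ does not, contradicting the divisibility obtained in the previous step. Therefore $\alpha_1=\alpha_2$.

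I do not expect a genuine obstacle here: the argument is essentially a one-line $2$-adic estimate once Korselt's criterion is in hand. The two points that need care are (i) using that $D_1$ is odd to pin down $m-1$ modulo $2^{\alpha_1+1}$ \emph{exactly}, not merely up to a congruence, and (ii) the integrality step $\alpha_2\geq\alpha_1+1$, which is what makes all the remaining factors collapse to $1$ modulo $2^{\alpha_1+1}$. As a side remark, if one keeps track of the case in which several of the $\alpha_i$ tie for the minimum, the same computation shows that $v_2(m-1)=\alpha_1$ when an odd number of factors attain the minimal exponent and $v_2(m-1)>\alpha_1$ otherwise; only the dichotomy $\alpha_1<\alpha_2$ versus $\alpha_1=\alpha_2$ is needed for the statement as given.
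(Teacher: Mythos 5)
Your argument is correct and is essentially the paper's own proof: assume $\alpha_1<\alpha_2$, reduce the product modulo $2^{\alpha_1+1}$ to get $m-1\equiv 2^{\alpha_1}\pmod{2^{\alpha_1+1}}$, and contradict $2^{\alpha_1+1}\mid L\mid m-1$ coming from Korselt's criterion. Note that, exactly like the paper's proof (which says ``since $2^{\alpha_1+1}\mid L$ by assumption''), you use the hypothesis in the strengthened form $2^{\alpha_1+1}\mid L$, i.e.\ the $2$-adic valuation of $L$ exceeds $\alpha_1$, rather than the literal ``$2^{\alpha_1}\mid L$'' of the statement, which appears to be a typo since only the stronger reading yields the contradiction.
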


\begin{proof}
Seeking a contradiction, assume that $\alpha_1 < \alpha_2$.
Then we have
\[m=\prod_{i=1}^n(2^{\alpha_i}D_i+1) \equiv 2^{\alpha_1}D_1+1 \pmod{2^{\alpha_1+1}}.\]

Since $2^{\alpha_1+1}\mid L$ by assumption and $L\mid m-1$ by Korselt's criterion, we obtain
\[m \equiv 1 \pmod{2^{\alpha_1+1}}.\]
It follows that 
\[2^{\alpha_1}D_1+1 \equiv 1 \pmod{2^{\alpha_1+1}}.\]
However, this implies that $D_1$ is even, which contradicts that $D_1$ is odd.
Thus, we have $\alpha_1=\alpha_2$.
\end{proof}
Observe that Theorem \ref{thm:power of 2} does not assume that each factor $2^{\alpha_i}D_i+1$ to be prime.

\section{Lemmas}\label{sec:lemmas}
In this section, we prove several lemmas that reduce the possible factors of a Carmichael numbers.

Let $m$ be a Carmichael number. Let $L$ be the least common multiple of $p-1$, where $p$ runs over the prime factors of $m$.
In this paper, we assume that
\[L=2^{\alpha}P^2,\]
where $\alpha\in \N$ and $P$ is an odd prime number.
This implies that each of the prime factors of $m$ is one of the followings:
\begin{align*}
&p=2^{k}+1 && \text{Type 1 (Fermat's prime)}\\
&q=2^lP+1 && \text{Type 2}\\
&r=2^sP^2+1 && \text{Type 3}.
\end{align*}

We further assume that $m$ has at least one Type 1 prime factor.

From now on, we reserve the letters $p, q, r$ (and the versions with subscripts $p_i, q_i, r_i$) for Type 1, 2, 3 primes, respectively.
Similarly we reserve letters $k, l, s$ for the exponent of $2$ in these prime numbers.

When we deal with numbers of the form $2^{\alpha}D+1$ with odd $D$, then we simply call $\alpha$ \textit{the $2$-power}.
So for example, the $2$-power of $41=2^8\cdot 5+1$ is $8$.

\begin{lemma}\label{lem:mod 3}
Let $P$ be an odd prime number. Suppose that $2^k+1$, $2^lP+1$, and $2^sP^2+1$  are prime numbers. Then:
\begin{enumerate}
\item $k$ is a power of $2$.
\item If $P\equiv 1 \pmod{3}$, then $l$ is even.
\item If $P\equiv 2 \pmod{3}$, then $l$ is odd.
\item If $P\neq 3$, then $s$ is even.
\end{enumerate}
\end{lemma}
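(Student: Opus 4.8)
The plan is to prove each statement by a short elementary argument, reducing everything modulo small numbers ($3$, or a prime factor of $2^k+1$). For part (1), I would argue by contradiction: if $k$ has an odd factor $d>1$, write $k=de$; then $2^e+1$ divides $2^{de}+1=2^k+1$, and since $1<2^e+1<2^k+1$ this contradicts primality. Hence $k$ has no odd factor exceeding $1$, i.e.\ $k$ is a power of $2$. (One must also record the degenerate case $k=0$, giving $p=2$, which is excluded since $m$ is odd, so in fact $k\geq 1$.)

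For parts (2) and (3), the idea is to work modulo $3$. Since $2\equiv -1\pmod 3$, we have $2^l\equiv (-1)^l\pmod 3$, so $2^lP+1\equiv (-1)^lP+1\pmod 3$. If $P\equiv 1\pmod 3$ and $l$ were odd, then $2^lP+1\equiv -1+1\equiv 0\pmod 3$, forcing $3\mid q$; since $q=2^lP+1>3$ (note $P$ is odd, $P\geq 3$, so $q\geq 2\cdot 3+1=7$), this contradicts that $q$ is prime. Hence $l$ is even, proving (2). Part (3) is symmetric: if $P\equiv 2\equiv -1\pmod 3$ and $l$ were even, then $2^lP+1\equiv P+1\equiv 0\pmod 3$, again forcing $3\mid q$ with $q>3$, a contradiction; so $l$ is odd. (A small remark is needed that $P\neq 3$ in cases (2) and (3), since $P\equiv\pm1\pmod 3$ already excludes $P=3$; and we should note $q$ is genuinely a new prime, not equal to $3$, which holds because $q>3$.)

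For part (4), assume $P\neq 3$ and suppose $s$ is odd. Then modulo $3$ we compute $r=2^sP^2+1\equiv (-1)^sP^2+1\equiv -P^2+1\pmod 3$. Since $P\neq 3$, we have $P^2\equiv 1\pmod 3$ (every integer not divisible by $3$ has square $\equiv 1$), so $r\equiv -1+1\equiv 0\pmod 3$. As $r=2^sP^2+1>3$ (again because $P\geq 5$ here, so $r\geq 2\cdot 25+1$), this contradicts that $r$ is prime. Therefore $s$ is even.

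None of these steps presents a real obstacle; the only points requiring mild care are the bookkeeping on degenerate cases — ensuring the "primes'' in question are actually larger than $3$ (or than $2$ in part (1)) so that divisibility by a small prime genuinely contradicts primality — and noting the standard fact that $2^e+1\mid 2^{de}+1$ when $d$ is odd, which follows from the factorization $x^d+1=(x+1)(x^{d-1}-x^{d-2}+\cdots+1)$ applied with $x=2^e$.
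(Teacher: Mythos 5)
Your proposal is correct and matches the paper's approach: for part (4) you give exactly the same computation modulo $3$ (using $P^2\equiv 1\pmod 3$ when $P\neq 3$, so $2^sP^2+1\equiv 2^s+1$ is divisible by $3$ when $s$ is odd), which is the only part the paper proves directly. For parts (1)--(3) the paper simply cites Wright's Lemma 3.1, and the standard arguments you supply — the factorization $2^e+1\mid 2^{de}+1$ for odd $d$ in part (1), and the reduction modulo $3$ with the observation $q>3$ in parts (2)--(3) — are precisely the expected proofs of that cited result, so there is no substantive difference.
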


\begin{proof}
The first three statements are proved in \cite[Lemma 3.1]{MR2988558}.

Suppose that $P\neq 3$. Then we have $P^2\equiv 1 \pmod{3}$.
Then
\begin{align*}
2^sP^2+1\equiv2^s+1&\pmod{3}\\
\equiv \begin{cases}
2 & \text{if $s$ is even}\\
0 & \text{if $s$ is odd}.
\end{cases}
\end{align*}
\end{proof}

In the next lemma, a Carmichael number $m$ might or might not have Type 1 prime factors.
\begin{lemma}\label{lem:P not congruent to 1 mod 12}
Let $m$ be a Carmichael number with $L=2^{\alpha}P^2$.
Suppose $m\neq p_1q_1r_1$ with $k_1=l_1=s_1$.
If $P \equiv 1 \pmod{3}$, then $P\not \equiv 1 \pmod{4}$.
\end{lemma}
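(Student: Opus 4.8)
The plan is to suppose, toward a contradiction, that $P\equiv 1\pmod 3$ while also $P\equiv 1\pmod 4$, so $P\equiv 1\pmod{12}$, and then to trap $m$ between a parity restriction on the $2$-powers of its prime factors and the fact that a given $2$-power can be realized by at most three of them.

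The first step is to record that, under $P\equiv 1\pmod 3$, Lemma \ref{lem:mod 3} forces every Type 2 prime to have even $l$ and (since $P\neq 3$) every Type 3 prime to have even $s$, while every Type 1 prime has $k$ a power of $2$. From this I would deduce $3\nmid m$: were $3=2^{1}+1$ a factor, its $2$-power $1$ would be attained only once (squarefreeness rules out a second $3$, and no Type 2 or Type 3 prime has $2$-power $1$), contradicting Theorem \ref{thm:power of 2}. Hence no Type 1 factor has $k=1$, so $k$ is an even power of $2$, and \emph{every} prime factor of $m$ has an even $2$-power. Let $a$ be the smallest of these $2$-powers; since $L=2^{\alpha}P^{2}$ with $\alpha$ the largest $2$-power, both $a$ and $\alpha$ are even and $a\geq 2$.

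Next I would split on whether $a=\alpha$. If so, every prime factor has $2$-power exactly $a$, and the only candidates are $2^{a}+1$, $2^{a}P+1$, $2^{a}P^{2}+1$; since a Carmichael number has at least three prime factors, all three are prime and $m=p_{1}q_{1}r_{1}$ with $k_{1}=l_{1}=s_{1}=a$, which the hypothesis excludes. So $a<\alpha$, and as $a,\alpha$ are both even, $\alpha\geq a+2$; thus $2^{a+2}\mid L\mid m-1$ and $m\equiv 1\pmod{2^{a+2}}$. Now I would compute $m\pmod{2^{a+2}}$: every prime factor of $2$-power $>a$ actually has $2$-power $\geq a+2$ and so is $\equiv 1$; each prime factor of $2$-power exactly $a$ is $2^{a}D+1$ with $D\in\{1,P,P^{2}\}$, and $P\equiv 1\pmod 4$ makes each such $D\equiv 1\pmod 4$. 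Writing $t$ for the number of prime factors of $2$-power $a$ and expanding the product (the cross terms vanish because $a\geq 2$ gives $2a\geq a+2$), one gets $m\equiv 1+2^{a}t\pmod{2^{a+2}}$; comparing with $m\equiv 1\pmod{2^{a+2}}$ forces $4\mid t$, so $t\geq 4$. But for a fixed $2$-power there is at most one prime of each type, so $t\leq 3$ — the desired contradiction.

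The delicate point is the congruence $m\equiv 1+2^{a}t\pmod{2^{a+2}}$: it relies on three things working together — that all $2$-powers are even (so no factor of $2$-power $a+1$ contributes a $2^{a+1}$ term that would let $t$ be odd), that $a\geq 2$ (so the quadratic and higher terms in the product expansion die), and that $P\equiv 1\pmod 4$ (so all the relevant $D$'s are $1\pmod 4$ and the honest count $t$, not a signed variant, appears). The omitted case $m=p_{1}q_{1}r_{1}$ with $k_{1}=l_{1}=s_{1}$ is precisely the situation $a=\alpha$, where $2^{a+2}\nmid L$ and no such deeper congruence is available, which is why it has to be set aside at the outset.
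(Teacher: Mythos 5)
Your proof is correct, and it reaches the contradiction by a genuinely different mechanism than the paper. The paper groups the two factors of minimal $2$-power $\beta_1$ into a single factor $v_1v_2$, uses $P\equiv 1\pmod 4$ and the evenness of $\beta_1$ to see that $v_1v_2$ has $2$-power exactly $\beta_1+1$, and then applies Theorem \ref{thm:power of 2} twice to regrouped factorizations of $m$: first to force a third factor of $2$-power $\beta_1$ (so that these three are $p_1q_1r_1$ with $k_1=l_1=s_1$), and then, since $m\neq p_1q_1r_1$ supplies a further factor of $2$-power at least $\beta_1+2$, to exhibit a unique smallest $2$-power in the expression $v_3(v_1v_2)\prod_{i\geq 4}v_i$. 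You instead compute $m$ directly modulo $2^{a+2}$ (where $a$ is the minimal $2$-power): after the same preliminaries (all $2$-powers even since $3\nmid m$ and $l_i,s_i$ are even, the case $a=\alpha$ being exactly the excluded $m=p_1q_1r_1$), the congruence $m\equiv 1+2^a t\pmod{2^{a+2}}$ forces $4\mid t$ for the number $t$ of factors of minimal $2$-power, while $t\leq 3$ because at most one prime of each type can have a given $2$-power. This is a self-contained count that bypasses the regrouping trick and the repeated use of Theorem \ref{thm:power of 2} (which you need only to exclude $3\mid m$, just as the paper does), and it in fact proves the slightly sharper statement that the number of minimal-$2$-power factors would have to be divisible by $4$; the paper's route has the advantage of staying entirely within the Theorem \ref{thm:power of 2} framework used throughout the article. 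Both arguments rest on the same three ingredients: evenness of all $2$-powers, $P\equiv 1\pmod 4$, and the bound of one prime per type per $2$-power.
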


\begin{proof}
	Let
	\[m=\prod_{i \geq 1}v_i,\]
	where $v_i$ is a prime of Type 1, 2, or 3.
	Note that any Carmichael has three or more prime factors.
	Let $\beta_i$ be the $2$-power of $v_i$ and suppose that $0< \beta_1= \beta_2 \leq \beta_3 \leq \cdots$.
	Observe that $\beta_1=\beta_2$ here by Theorem \ref{thm:power of 2}.
	Note that since $P\equiv 1 \pmod{3}$, both $l_i$ and $s_i$ are even by Lemma \ref{lem:mod 3}.
	Hence $k\neq 1$, otherwise $k_1$ is the unique smallest power of $2$ and this contradicts Theorem \ref{thm:power of 2}. So $k_i$ is even, and hence every $\beta_i$ is even.
Let 
\[v_i=2^{\beta_i}P^{\delta_i}+1,\]
where $\delta_i=0, 1, 2$ depending on the type of $v_i$.
Then we have
\begin{align*}
v_1v_2&=(2^{\beta_1}P^{\delta_1}+1)(2^{\beta_1}P^{\delta_2}+1)\\
&=2^{2\beta_1}P^{\delta_1+\delta_2}+2^{\beta_1}P^{\delta_1}+2^{\beta_1}P^{\delta_2}+1\\
&=2^{\beta_1}(2^{\beta_1}P^{\delta_1+\delta_2}+P^{\delta_1}+P^{\delta_2})+1.
\end{align*}

Now seeking a contradiction, assume that $P\equiv 1 \pmod{4}$.
Then we have
\[2^{\beta_1}P^{\delta_1+\delta_2}+P^{\delta_1}+P^{\delta_2}\equiv 2 \pmod{4}\]
since $\beta_1$ is even.
It follows that the $2$-power of $v_1v_2$ is $\beta_1+1$.

Then consider the expression
\[m=(v_1v_2)\prod_{i \geq 3}v_i.\]
If $\beta_1 < \beta_3$, then $\beta_3 \geq \beta_1+2$ as each $\beta_i$ is even.
Thus $2^{\beta_1+2}|L$ and $2^{\beta_1+1}$ is the unique smallest power of $2$ in the above expression of $m$.
This contradicts Theorem \ref{thm:power of 2}.
Hence $\beta_3=\beta_1$. Then we have $v_1v_2 v_3=p_1q_1r_1$ with $k_1=l_1=s_1$, and by assumption that $m\neq p_1q_1r_1$, there exists $v_4$ with $\beta_4 \geq \beta_1+2$.
Then in the expression
\[m=v_3(v_1v_2)\prod_{i\geq 3}v_i,\]
$v_3$ has the unique smallest power $\beta_1$ of $2$, and $2^{\beta_1+1}\mid 2^{\beta_4}\mid L$.
This contradicts Theorem \ref{thm:power of 2}.
Hence we conclude that $P\not \equiv 1 \pmod{4}$.

\end{proof}

\subsection{Fermat primes and the prime $P$}

\begin{lemma}\label{lem:P divides product of Type 1}Let $m$ be a Carmichael number with $L=2^{\alpha}P^2$.
	Let $p_1, \dots, p_n$ be the Type 1 prime divisors of $m$. Then
	\[p_1\cdots p_n \equiv 1 \pmod{P}.\]
\end{lemma}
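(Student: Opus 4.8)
The plan is to reduce the prime factorization of $m$ modulo $P$ and then invoke Korselt's criterion. Write
\[m=\Big(\prod_{i=1}^{n}p_i\Big)\Big(\prod_{j}q_j\Big)\Big(\prod_{k}r_k\Big),\]
where the $p_i$ are the Type 1 factors, the $q_j$ the Type 2 factors, and the $r_k$ the Type 3 factors (the latter two families possibly empty). The key observation is that every Type 2 and every Type 3 prime is $\equiv 1\pmod{P}$: indeed $q_j=2^{l_j}P+1\equiv 1\pmod{P}$ and $r_k=2^{s_k}P^2+1\equiv 1\pmod{P}$ directly from their defining forms.

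Given this, I would reduce the displayed product modulo $P$ to get $m\equiv \prod_{i=1}^{n}p_i\pmod{P}$. On the other hand, since $P\mid P^2\mid L=2^{\alpha}P^2$ and $L\mid m-1$ by Korselt's criterion, we have $m\equiv 1\pmod{P}$. Combining the two congruences gives $\prod_{i=1}^{n}p_i\equiv 1\pmod{P}$, which is the claim.

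There is no real obstacle here; the argument is a one-line computation once the forms of the three prime types are written out. The only degenerate point worth noting is the case $n=0$ (no Type 1 factors present), where the empty product is $1$ and the statement is trivially true — and in any case the standing hypothesis of the paper guarantees $m$ has at least one Type 1 factor.
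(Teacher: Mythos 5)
Your argument is correct and is essentially identical to the paper's own proof: both reduce $m$ modulo $P$ using the fact that Type 2 and Type 3 primes are $\equiv 1 \pmod{P}$, and then apply Korselt's criterion via $P\mid L\mid m-1$ to conclude $p_1\cdots p_n\equiv 1\pmod{P}$.
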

\begin{proof}
	By Korselt's criterion, we have $P\mid L\mid m-1$. It follows that
	\[1\equiv m \equiv p_1\cdots p_n \pmod{P}\]
	since Type 2 and Type 3 primes are congruent to $1$ modulo $P$.
\end{proof}

\begin{corollary}\label{cor:two Fermat's prime}
	Let $m$ be a Carmichael number with $L=2^{\alpha}P^2$.
	Assume that $m$ has a Type 1 (Fermat) prime factor. Then $m$ has at least two distinct Fermat prime factors.
\end{corollary}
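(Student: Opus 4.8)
The plan is to argue by contradiction, supposing that $m$ has exactly one Type~1 prime divisor, say $p_1=2^{k_1}+1$. The single tool needed is Lemma~\ref{lem:P divides product of Type 1}, which asserts that the product of all Type~1 prime divisors of $m$ is congruent to $1$ modulo $P$. Under our contrary hypothesis this product is just $p_1$ itself, so the lemma reads $p_1\equiv 1\pmod P$, i.e. $P\mid p_1-1=2^{k_1}$.

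The contradiction is then immediate: $P$ is an odd prime, so it cannot divide the power of two $2^{k_1}$. Hence the assumption that $m$ has exactly one Fermat prime factor is untenable. Combined with the standing hypothesis that $m$ has at least one Type~1 prime factor, this shows $m$ has two or more Type~1 prime divisors. Finally, since $m$ is a Carmichael number it is squarefree by Korselt's criterion, so its prime factors are pairwise distinct; in particular the two (or more) Fermat prime factors just produced are distinct, which is exactly the claim.

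I do not expect any real obstacle here: the result is essentially a one-line corollary of Lemma~\ref{lem:P divides product of Type 1}, the only points requiring (trivial) care being that $P$ is odd (so $P\nmid 2^{k_1}$) and that "distinct" is guaranteed by squarefreeness rather than needing a separate argument.
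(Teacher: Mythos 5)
Your argument is exactly the paper's: assume a unique Type 1 divisor $p_1=2^{k_1}+1$, apply Lemma \ref{lem:P divides product of Type 1} to get $p_1\equiv 1\pmod{P}$, hence $P\mid 2^{k_1}$, contradicting that $P$ is odd. The added remark that distinctness follows from squarefreeness is a harmless extra observation; the proof is correct and matches the paper.
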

\begin{proof}
	If there is a unique Type 1 divisor $p_1=2^{k_1}+1$ of $m$, then Lemma \ref{lem:P divides product of Type 1} yields that 
	\[p_1=2^{k_1}+1\equiv 1 \pmod{P}.\]
	Thus $P\mid 2^{k_1}$ and this is impossible because $P$ is an odd prime.
	
\end{proof}

For the rest of the paper, we assume that $m$ is a Carmichael number with $L=2^{\alpha}P^2$ and $m$ has a Type 1 factor. Also we restrict Type 1 factors to be known Fermat's primes: $p_i=3, 5, 17, 257, 65537$.

By Lemma \ref{lem:P divides product of Type 1} and Corollary \ref{cor:two Fermat's prime}, the prime $P$ appears in the prime factorization of $R-1$, where $R$ is the product of two or more known Fermat's primes.

Table \ref{table:products of fermat primes} can be found in \cite[Table 1]{MR2988558}, though we rearranged the table by $k_1$.

\begin{table}[ht]
\caption{Products of Fermat primes}
\centering
\begin{tabular}{c c c c}
\hline\hline
Combination of Primes $(R)$ & Factorization of $R-1$ & $k_1$  \\ [0.5ex] 
\hline
$3\ast 5$ & $2\ast7$ & $1$ &\\
$3\ast 17$ & $2\ast 5^2$ & $1$ &\\
$3\ast 257$ & $2\ast 5 \ast 7 \ast 11$ & $1$ &\\
$3\ast 65537$ & $2\ast 5 \ast 19661$ & $1$ &\\
$3\ast 5 \ast 17$ & $2\ast 127$ & $1$ &\\
$3\ast 5 \ast 257$ & $2\ast 41 \ast 47 $ & $1$ &\\
$3\ast 17 \ast 257$ & $2\ast 6553$ & $1$ &\\
$3\ast 5 \ast 65537$ & $2\ast 491527$ & $1$ &\\
$3\ast 17 \ast 65537$ & $2\ast 127 \ast 13159$ & $1$ &\\
$3\ast 257 \ast 65537$ & $2\ast 25264513$ & $1$ &\\
$3\ast 5 \ast 17 \ast 257$ & $2\ast 7 \ast 31 \ast 151$ & $1$ &\\
$3\ast 5 \ast 257 \ast 65537$ & $2\ast 7 \ast 18046081$ & $1$ &\\
$3\ast 5 \ast 17 \ast 65537$ & $2\ast 8355967$ & $1$ &\\
$3 \ast 17 \ast 257 \ast 65537$ & $2\ast 19 \ast 22605091$ & $1$ &\\
$3 \ast 5 \ast 17 \ast 257 \ast 65537$ & $2\ast 2147483647$ & $1$ &\\
\hline
$5\ast 17$ & $2^2 \ast 3 \ast 7$ & $2$ &\\
$5\ast 257$ & $2^2 \ast 3 \ast 107$ & $2$ &\\
$5\ast 65537$ & $2^2 \ast 3 \ast 7 \ast 47 \ast 83$ & $2$ &\\
$5\ast 17 \ast 257$ & $2^2 \ast 43 \ast 127$ & $2$ &\\
$5\ast 17 \ast 65537$ & $2^2 \ast 131 \ast 10631$ & $2$ &\\
$5\ast 257 \ast 65537$ & $2^2 \ast 467 \ast 45083$ & $2$ &\\
$5 \ast 17 \ast 257 \ast 65537$ & $2^2 \ast 3 \ast 7 \ast 11 \ast 31 \ast 151 \ast 331$ & $2$ &\\
\hline
$17\ast 257$ & $2^4 \ast 3^3 \ast 7 \ast 13$ & $4$ &\\
$17\ast 65537$ & $2^4 \ast 3^3 \ast 2579$ & $4$ &\\
$17 \ast 257 \ast 65537$ & $2^4 \ast 29 \ast 43 \ast 113 \ast 127$ & $4$ &\\
\hline
$257 \ast 65537$ & $2^8 \ast 3 \ast 7 \ast 13 \ast 241$ & $8$ &\\[1ex]
\hline
\end{tabular}
\label{table:products of fermat primes}
\end{table}

\subsection{Cases}
Let
\[m=\left(\prod_{i=1}^{n_1}p_i\right)\left(\prod_{i=1}^{n_2}q_i\right)\left(\prod_{i=1}^{n_3}r_i\right)=\left(\prod_{i=1}^{n_1} 2^{k_i}+1 \right)\left(\prod_{i=1}^{n_2} 2^{l_i}P+1\right)\left(\prod_{i=1}^{n_3} 2^{s_i}P^2+1\right)\]
with $k_1<k_2<\dots < k_{n_1}$, $l_1< l_2 <\dots < l_{n_2}$, and $s_1<s_2< \dots<s_{n_3}$
be a Carmichael number with $L=2^{\alpha}P^2$.
We are assuming $n_1 \geq 1$.
By Theorem \ref{thm:power of 2}, there cannot be a unique smallest $2$-powers.
Thus, there are three cases to consider.
\begin{enumerate}
\item[\textbf{Case A}] $k_1=l_1 \leq s_1$.
\item[\textbf{Case B}] $k_1=s_1< l_1$.
\item[\textbf{Case C}] $l_1=s_1 <k_1$.
\end{enumerate}

\section{Procedure}\label{subsec:procedure}
We explain the procedure to find all Carmichael numbers for a given $P$.
In the sequel, we use the letter $x$ to denote an odd number but its actual value could be different in each occurrence. 
For example, we write
\[5\cdot 13=(2x+1)(2^2x+1)=2^6x+1.\]
In this case, the actual values are $x=2, 3, 1$ in this order.

Let us fix $P$. Then each of Case A, B, C, we start with two prime numbers with minimal $2$-power together with Type 1 primes that give $P$.

\begin{enumerate}
\item[\textbf{Step 1}]
We multiply some or all of these known factors and write it as $(2^{a_1}x+1)\cdots (2^{a_n}x+1)$, with $a_1< a_2 \leq a_3 \leq \cdots \leq a_n$. It is possible that we have only one term.
Then a Carmichael number is of the form
\[m=(2^{a_1}x+1)\cdots (2^{a_n}x+1)\prod_{i \geq 1} v_i,\]
where $v_i$ are primes of Type 1, 2, 3, or the product could be empty.
If the product is empty, then $m=(2^{a_1}x+1)\cdots (2^{a_n}x+1)$ is a Carmichael number, otherwise there is no Carmichael number for this $P$.

\item[\textbf{Step 2}](The minimality argument)
If the $2$-powers of every $v_i$ is greater than $a_1$, then $a_1$ is the unique smallest $2$-power. 
This is prohibited by Theorem \ref{thm:power of 2}.
Thus, there must be $v_i$ of the $2$-power less than or equal to $a_1$.
Since there cannot be a unique smallest $2$-power, we have two cases to consider.
The first case is that there is a pair, say, $(v_1, v_2)$ and the $2$-power of $v_1, v_2$ are the same, say $a$, and $a < a_1$. (We call such a pair \textit{$a$-pair}.)
The second case is that the $2$-power of $v_1$ is $a_1$, we call such a prime number $\textit{$a_1$-prime}$.

In either case, there are finitely many possible cases.

\item[\textbf{Step 3}] We multiply some or all of the factors we obtained in Step 1.
Then we have
\[m=(2^{b_1}x+1)\cdots (2^{b_f}x+1)\prod v_i,\]
where $v_i$ can be a prime of Type 1, 2, 3 that did not appear in Step 1 (recall that every Carmichael number is squarefree), or the product $\prod v_i$ could be empty.

We repeat Step 2 and 3 until there is no possible pair $(v_i, v_{i+1})$ or $v_j$ in Step 2.

\end{enumerate}

To narrow down the number of possible prime factors of a Carmichael number in the procedure, the following theorem will be useful. The theorem is true for any Carmichael numbers without any restrictions.

\begin{theorem}[The minimality argument]\label{thm:minimality}
	
	Let $m$ be a Carmichael number. Suppose that $2^a\mid L$ for some $a\in \N$.
	Write
	\[m=(2^bx+1)(2^{b_1}x_1+1)\cdots (2^{b_f}x_f+1)\prod_{i=1}^{g}v_i,\]
	where $x, x_1, \dots, x_f$ are odd integers and $v_i$ is a prime factor of $m$.
	Let $\beta_i$ be the $2$-power of $v_i$. That is, $2^{\beta_i}D+1=v_i$ for some odd $D$.
	Assume that
	\[b < b_1\leq b_2 \leq \cdots \leq b_f \text{ and } \beta_1\leq \beta_2 \leq \cdots \leq \beta_g.\]
	\begin{enumerate}
		\item If $b < a$, then $v_1$ is a $b$-prime or $(v_1, v_2)$ is $\beta_1$-pair with $\beta_1 < b$.
		\item If $b \geq a$, then either $v_1$ is a $\beta_1$-prime with $a\leq \beta_1 \leq b$ or we have a $\beta_1$-pair $(v_1, v_2)$ with $\beta_1 <a$.		
	\end{enumerate}
	\end{theorem}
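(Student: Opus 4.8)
The plan is to apply Theorem~\ref{thm:power of 2} repeatedly to strategically chosen regroupings of the factorization of $m$, exactly mirroring the logic of the proof of Lemma~\ref{lem:P not congruent to 1 mod 12}. The object is a factorization $m=(2^bx+1)(2^{b_1}x_1+1)\cdots(2^{b_f}x_f+1)\prod_{i=1}^{g}v_i$ in which the first $1+f$ listed factors are composite (products of primes), but the remaining $g$ factors $v_i$ are genuine primes with $2$-powers $\beta_1\leq\cdots\leq\beta_g$, and $b<b_1\leq\cdots\leq b_f$. We are told $2^a\mid L$. First I would deal with the degenerate possibility $g=0$: then $m$ has the unique smallest $2$-power $b$ among the listed factors, and since $2^b\mid L$ (because $b<a$ in case~(1), or $a\leq b$ in case~(2) so $2^a\mid L$ forces $2^b\mid L$ as well — actually we only need $2^{b}\mid L$, which follows since $L$ is divisible by $2^a$ and $b\geq a$, or since $b<a\leq$ some exponent dividing $L$... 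I would just note $2^b \mid L$ holds because some factor of $m$ has $2$-power exactly... no: $2^b \mid L$ because $L$ is the lcm of $p-1$ over primes $p\mid m$, and some prime dividing $2^bx+1$ contributes $2^b$ or... in fact $2^b\mid L$ follows from $a\le b$ giving $2^a\mid 2^b$, wrong direction; the cleanest statement is that in a Carmichael number written as a product of odd factors with a unique smallest $2$-power $c$, we must have $2^c\nmid L$ by Theorem~\ref{thm:power of 2}, whereas here $2^b\mid L$ will hold in both cases because $b\ge a$ implies nothing — I will instead argue $g\geq 1$ is forced precisely when $2^b\mid L$, and handle $2^b\mid L$ vs not as part of the case split). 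So: if $2^b\mid L$ and $g=0$ we contradict Theorem~\ref{thm:power of 2}; hence $g\geq 1$, and moreover among all the listed composite factors and the $v_i$ there is no unique smallest $2$-power.

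**Next**, for part~(1) (where $b<a$, so $2^b\mid L$): since $b$ is strictly smaller than every $b_j$, and there can be no unique smallest $2$-power in the grouping $m=(2^bx+1)\prod_j(2^{b_j}x_j+1)\prod_i v_i$, some $v_i$ must have $2$-power $\leq b$. If the smallest such $2$-power $\beta_1$ equals $b$, I would then need $v_1$ to not be the unique smallest-$2$-power factor; but $2^b x+1$ also has $2$-power $b$, so a $b$-prime $v_1$ is consistent — that is the first alternative. If instead $\beta_1<b$, then $v_1$ has the strictly unique smallest $2$-power unless $\beta_2=\beta_1$, forcing the $\beta_1$-pair $(v_1,v_2)$ with $\beta_1<b$ — the second alternative. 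This establishes~(1).

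**For part~(2)** ($b\geq a$), the subtlety is that $2^b\mid L$ may fail, so Theorem~\ref{thm:power of 2} applied directly to the full grouping need not bite at level $b$. But $2^a\mid L$ does hold. I would argue: if every $\beta_i>b$, then in the grouping $m=(2^bx+1)(2^{b_1}x_1+1)\cdots(2^{b_f}x_f+1)\prod_i v_i$ the factor $2^bx+1$ has the unique smallest $2$-power $b$, and since $b\geq a$ we get $2^a\mid 2^b\mid L$ — wait, that is $2^a\mid 2^b$, which is true, but I need $2^b\mid L$; instead I use Theorem~\ref{thm:power of 2} in the contrapositive form that a unique smallest $2$-power $c$ among odd factors of a Carmichael number forces $2^c\nmid L$, hence $2^b\nmid L$; combined with $2^a\mid L$ and $a\le b$ this is a contradiction. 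So some $\beta_i\leq b$. Take the least such index; if $\beta_1<a$ then $v_1$ is strictly minimal unless $\beta_2=\beta_1$, giving a $\beta_1$-pair with $\beta_1<a$. If $a\leq\beta_1\leq b$, then $v_1$ is an $a$-... a $\beta_1$-prime with $a\leq\beta_1\leq b$ — but I must rule out that $v_1$ is still the unique minimum, which would again force $2^{\beta_1}\nmid L$, contradicting $\beta_1\geq a$ and $2^a\mid L$; so there must be a second factor (composite or prime) of $2$-power $\beta_1$, and in all sub-cases the conclusion holds. **The main obstacle** is bookkeeping: keeping straight, in each sub-case, which of the ``one prime of $2$-power $c$'' versus ``pair of primes of $2$-power $c$'' conclusions is forced, and carefully invoking Theorem~\ref{thm:power of 2} in its exact hypothesis ``$2^{\alpha_1}\mid L$'' — in particular noticing that when $\beta_1\geq a$ we automatically have $2^{\beta_1}\mid L$ only if $\beta_1\le$ the $2$-adic valuation of $L$, which holds since that valuation is $\geq a$ and... here I would use that the valuation of $L$ is exactly some $\gamma$, that $\gamma\geq a$, and that we only ever apply Theorem~\ref{thm:power of 2} at a level $c\leq\gamma$, which requires checking $c\leq\gamma$ in each sub-case. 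Once that ledger is set up honestly, each implication is a two-line application of Theorem~\ref{thm:power of 2}, and no computation is needed.
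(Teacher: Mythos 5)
Part (1) of your argument is correct and is essentially the paper's proof. The genuine gap is in part (2), at the very first step, where you must show that some $\beta_i\le b$. You suppose every $\beta_i>b$, observe that $b$ is then the unique smallest $2$-power in the factorization, apply Theorem \ref{thm:power of 2} in contrapositive form to conclude $2^b\nmid L$, and assert that this contradicts ``$2^a\mid L$ and $a\le b$''. It does not: $a\le b$ gives divisibility in the wrong direction, and $2^a\mid L$ together with $2^b\nmid L$ are perfectly consistent whenever the $2$-adic valuation of $L$ lies in $[a,b)$. So your argument never rules out $\beta_1>b$ in the case $b>a$, and part (2) is unproved as written. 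The ingredient you are missing --- and the one the paper's proof uses --- is the primality of $v_1$: since $v_1$ is a prime factor of $m$, the number $v_1-1=2^{\beta_1}D$ divides $L$, so $2^{\beta_1}\mid L$; hence if $\beta_1>b$ then $2^{b+1}\mid L$, and the unique smallest $2$-power $b$ now genuinely contradicts Theorem \ref{thm:power of 2}. Nowhere in your proposal do you use the fact that the $v_i$ are prime factors to feed powers of $2$ into $L$, and that is exactly what makes the case $b\ge a$ work.

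The same wrong-direction inference reappears in your final sub-case $a\le\beta_1\le b$, where you try to show $v_1$ cannot be the unique minimum by claiming that $2^{\beta_1}\nmid L$ contradicts $\beta_1\ge a$ and $2^a\mid L$; again there is no contradiction when $\beta_1>a$. Fortunately that step is superfluous: ``$v_1$ is a $\beta_1$-prime with $a\le\beta_1\le b$'' is precisely the stated conclusion in that case and requires nothing more. Your handling of the sub-case $\beta_1<a$ (uniqueness of the minimum is impossible because $2^{\beta_1+1}\mid 2^a\mid L$, so $\beta_2=\beta_1$ and $(v_1,v_2)$ is a $\beta_1$-pair) is correct and is equivalent to the paper's device of moving $v_1$ to the front and re-applying part (1). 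The muddled discussion of $g=0$ in your opening paragraph is harmless, but the gap identified above must be repaired for part (2) to stand.
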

	
	\begin{proof}
		\begin{enumerate}
			\item Since $b < a$, we have $2^{b+1}\mid 2^a\mid L$.
			It follows from Theorem \ref{thm:power of 2} that $b$ cannot be the unique smallest $2$-power. 
			Thus, we have the following possibilities: $b=\beta_1$, $\beta_1=\beta_2=b$, or $\beta_1=\beta_2 < b$ by Theorem \ref{thm:power of 2}.
			The first two cases yield that $v_1$ is a $b$-prime.			The third case implies that $(v_1, v_2)$ is a $\beta_1$-pair and $\beta_1 < b$.
			
			\item Assume that $\beta_1 > b$. Then we have $2^{b+1}\mid 2^{\beta_1}\mid L$.
			This implies that $b$ is the unique smallest $2$-power, which contradicts Theorem \ref{thm:power of 2}. Thus we have $\beta_1\leq b$.

			If $\beta_1 <a$, then $\beta_1<a\leq b <b_1$. 
			Write $m$ as
			\[m=(2^{\beta_1}+1)(2^bx+1)(2^{b_1}x_1+1)\cdots (2^{b_f}x_f+1)\prod_{i=2}^{g}v_i.\]
			Note that the product $\prod_{i=2}^{g}v_i$ cannot be empty, otherwise $\beta_1$ is the unique smallest $2$-power and this contradicts Theorem \ref{thm:power of 2}.
			
			Since the product is nonempty, we can apply part (1) with $\beta_1$ instead of $b$.
			Thus either $v_2$ is a $\beta_1$-prime or $(v_2, v_3)$ is a $\beta_2$-pair with $\beta_2 < \beta_1$. But the latter case never happen as $\beta_1\leq \beta_2$.
			Hence $(v_1, v_2)$ is a $\beta_1$-pair, with $\beta_1 <a$.								\end{enumerate}
	\end{proof}


\section{Case A: $k_1=l_1 \leq s_1$.}
We classify Carmichael numbers with $k_1=l_1 \leq s_1$ in this section.
We reduce the number of the possible $P$ from Table \ref{table:products of fermat primes} by removing those $(P, k_1)$ such that $2^{k_1}P+1$ are not prime.
This procedure is done in \cite[Table 2]{MR2988558} except that we do not have exact counterparts of \cite[Theorem 3.2 and Theorem 3.3]{MR2988558}.
The lack of \cite[Theorem 3.3]{MR2988558} leads two additional pairs $(P, k_1)=(11, 1), (41, 1)$ to consider.
Also we need to consider pairs $(13, 4)$, $(13, 8)$ because of the lack of \cite[Theorem 3.2]{MR2988558}.
However, the latter two pairs can be eliminated as follows.
Since $13\equiv 1 \pmod{12}$, by Lemma \ref{lem:P not congruent to 1 mod 12}, we must have $m=p_1q_1r_1$ with $k_1=l_1=s_1$.
For the pair $(13, 4)$, the number $2^4\cdot 13+1=11\cdot 19$ is not prime, and for the pair $(13, 8)$ the number $2^8\cdot 13^2+1=5\cdot 17 \cdot 509$ is not prime. Hence these pairs do not produce a Carmichael number.

Table \ref{table:Candidates for P} lists possible candidates for $P$.
Note that $11$ is removed from $5 \ast 17 \ast 257 \ast 65537$ since $k_1=l_1$ is odd as $11\equiv 2 \pmod{3}$.
\begin{table}[ht]
\caption{After removing composite $2^{k_1}P+1$}
\centering
\begin{tabular}{c c c c}
\hline\hline
Combination of Primes $(R)$ & Possible factors of $R-1$ & $k_1$  \\ [0.5ex] 
\hline
$3\ast 17$ & $5$ & $1$ &\\
$3\ast 257$ & $ 5, 11$ & $1$ &\\
$3\ast 65537$ & $5, 19661$ & $1$ &\\
$3\ast 5 \ast 257$ & $41$ & $1$ &\\
\hline
$5\ast 17$ & $3, 7$ & $2$ &\\
$5\ast 257$ & $3$ & $2$ &\\
$5\ast 65537$ & $3, 7$ & $2$ &\\
$5\ast 17 \ast 257$ & $43, 127$ & $2$ &\\
$5 \ast 17 \ast 257 \ast 65537$ & $3, 7$ & $2$ &\\[1ex]
\hline
\end{tabular}
\label{table:Candidates for P}
\end{table}

Thus the possible primes are
\[P=3, 5, 7, 11, 41, 43, 127, 19661.\]

In the sequel, we actually prove that none of these produces a Carmichael number except for $P=3, 5$.

\subsection{The impossible case: $P=43$}
As in \cite{MR2988558}, let us start with $43$.
We prove that there is no Carmichael number with $P=43$.
This case illustrates the procedure explained in Section \ref{subsec:procedure}.

Let $m$ be a Carmichael number with $L=2^{\alpha}\cdot 43^2$.
Since $P=43$, the product of Type 1 primes must be $5\cdot 17 \cdot 257$ and from Table \ref{table:Candidates for P}.

Since $k_1=l_1=2$ (recall we are dealing with Case A), we have
\[q_1=2^2\cdot 43+1=173.\]
Let
\[m=5\cdot 17 \cdot 257 \cdot 173\prod_{i=1}u_i,\]
where $u_i$ is either Type 2 or Type 3 prime (and there must be at least one Type 3 prime.)
Then we have
\begin{align*}
m=(2^4+1)(2^5\cdot 27+1)(2^8+1)\prod_{i=1}u_i.
\end{align*}
Since $2^8\mid L$, the product cannot be empty, otherwise $4$ is the unique smallest $2$-power.

Thus, we have two cases: There exists a pair $(u_1, u_2)$ of Type 2 and Type 3 primes with $2$-power less than $4$ (we call such a pair \textit{$n$-pair} with $n<4$), or there exists $u_1$ of Type 2 or Type 3 with $2$-power $4$ (we call such a prime \textit{4-prime}).
Since $k_1=l_1=2$ is used already, the $2$-power of a pair must be greater than $2$ and less than $4$.
Thus the $2$-power of a pair must be $3$. 
However, since $43\equiv 1 \pmod{3}$, we know $l_i$ is even by Lemma \ref{lem:mod 3}.
So there is no such pair.

Note that the numbers
\[2^4\cdot 43+1=13\cdot 53 \text{ and } 2^4 \cdot 43^2+1=5\cdot 61 \cdot 97\]
are composite.
So there is no $4$-prime of Type 2 or 3.

We proved:
\begin{theorem}
	If $P=43$, then there is no Carmichael number in Case A.
\end{theorem}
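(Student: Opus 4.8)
The plan is to run the procedure of Section \ref{subsec:procedure} for $P=43$ to completion and show it terminates with no Carmichael number. Since $P=43$ and we are in Case A, Table \ref{table:Candidates for P} forces the Type 1 part of $m$ to be exactly $5\cdot 17\cdot 257$ (with $k_1=2$), and $k_1=l_1=2$ forces the smallest Type 2 prime to be $q_1 = 2^2\cdot 43+1 = 173$. So we write
\[
m = 5\cdot 17\cdot 257\cdot 173\cdot\prod_{i\geq 1}u_i = (2^4+1)(2^8+1)(2^5\cdot 27+1)\prod_{i\geq 1}u_i,
\]
where the $u_i$ are the remaining prime factors, necessarily of Type 2 or Type 3, and at least one Type 3 prime is present (else $P^2\nmid L$).

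Next I would invoke the minimality argument (Theorem \ref{thm:minimality}). The product $\prod u_i$ cannot be empty: since a Type 3 prime exists we have $2^8\mid L$, and if every $u_i$ had $2$-power exceeding $4$ then $4$ would be the unique smallest $2$-power, contradicting Theorem \ref{thm:power of 2}. Hence either some $u_i$ is a $4$-prime (of Type 2 or 3), or there is an $n$-pair $(u_1,u_2)$ with $n<4$. Because $k_1=l_1=2$ is already used and all factors here have $2$-power $\geq 2$, such a pair would have $2$-power exactly $3$; but $43\equiv 1\pmod 3$, so by Lemma \ref{lem:mod 3} every $l_i$ is even, ruling out any Type 2 prime of $2$-power $3$, and a pair of $2$-power $3$ is impossible. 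For the remaining option, I would simply check that $2^4\cdot 43+1 = 13\cdot 53$ and $2^4\cdot 43^2+1 = 5\cdot 61\cdot 97$ are both composite, so no $4$-prime of Type 2 or Type 3 exists either. Both branches of Step 2 are dead, so the procedure terminates immediately and yields no Carmichael number, proving the theorem.

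The only genuine work is the two factorizations, and the only subtlety is making sure the minimality argument has been set up with the right bookkeeping of which $2$-powers are already consumed — that $k_1=l_1=2$ is used, so a forbidden pair must sit strictly between $2$ and $4$ and hence at $3$, which the $\pmod 3$ congruence kills. There is no real obstacle here; this case is presented precisely because it is the clean illustration of the machinery, and the argument above is essentially a restatement of the general procedure with $a=8$, $b=4$, $P=43$ plugged in.
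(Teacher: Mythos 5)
Your proposal is correct and follows essentially the same route as the paper: the same decomposition $m=(2^4+1)(2^8+1)(2^5\cdot 27+1)\prod u_i$, the same use of the minimality argument to force a $3$-pair or a $4$-prime, the parity of $l_i$ from $43\equiv 1\pmod 3$ to kill the pair, and the factorizations $2^4\cdot 43+1=13\cdot 53$, $2^4\cdot 43^2+1=5\cdot 61\cdot 97$ to kill the $4$-prime. No substantive difference from the paper's argument.
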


\subsection{The impossible case: $P=19661$}
Let us next consider the case $P=19661$.
From Table \ref{table:Candidates for P}, the product of Type 1 primes is $3\cdot 65537$ and $k_1=1$.
Since $k_1=l_1=1$, we have $q_1=2\cdot 19661+1=39323$.
Let
\[m=3\cdot 65537 \cdot 39323 \prod_{i\geq 1}u_i,\]
where $u_i$ is a prime of Type 2 or Type 3.
Note that $2^{16}\mid L$, and $P\equiv 2 \pmod{3}$, hence $l_i$ is odd by Lemma \ref{lem:mod 3}.
We have
\[m=(2^4 \cdot 7373+1)(2^{16}+1)\prod_{i \geq 1}u_i.\]
This implies that there exists a pair $(u_1, u_2)$ of Type 2 and 3 with $2$-power between $k_1=l_1<2$ and $3$, or there exists $4$-prime of Type 2 or 3.
Since $l_i$ is odd, the $2$-power of such a pair must be $3$, and $4$-prime must be Type 3.
Note that the numbers
\begin{align*}
2^3 \cdot 19661+1&=11\cdot 79 \cdot 181\\
2^4\cdot 19661^2+1&=3217\cdot 1922561
\end{align*}
are composite, hence there is no $3$-pair or $4$-prime.

This proves:
\begin{theorem}
	If $P=19661$, then there is no Carmichael number for Case A.
\end{theorem}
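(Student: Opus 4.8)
The plan is to run the procedure of Section~\ref{subsec:procedure} with $P=19661$ and show that it closes off at once. First I would pin down the Type~1 part of $m$. By Lemma~\ref{lem:P divides product of Type 1} the product of the Type~1 prime divisors of $m$ is $\equiv 1 \pmod{19661}$, and since Table~\ref{table:products of fermat primes} records $R-1$ for every product $R$ of two or more known Fermat primes, $19661$ divides $R-1$ only when $R=3\cdot 65537$; a single Fermat factor is excluded by Corollary~\ref{cor:two Fermat's prime}. So the Type~1 part of $m$ is exactly $3\cdot 65537$, hence $k_1=1$, and Case~A forces $l_1=k_1=1$, yielding the Type~2 factor $q_1=2\cdot 19661+1=39323$, which is prime by Table~\ref{table:Candidates for P}. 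As $19661\equiv 2\pmod 3$ and $19661\neq 3$, Lemma~\ref{lem:mod 3} says every remaining Type~2 prime factor of $m$ has odd $2$-power while every Type~3 prime factor has even $2$-power (hence $2$-power at least $2$).

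Next I would group the known odd factors as $3\cdot 39323=2^4\cdot 7373+1$, with $7373$ odd, so that
\[
m=(2^4\cdot 7373+1)(2^{16}+1)\prod_{i\ge 1}u_i,
\]
where the $u_i$ are Type~2 or Type~3 primes other than $q_1$. Since $65537-1=2^{16}$ divides $L$, the minimality argument (Theorem~\ref{thm:minimality}(1) with $b=4$) forces $\prod_{i\ge1}u_i$ to be nonempty and to contain either a $4$-prime or a $\beta$-pair with $\beta<4$.

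I would then rule out both alternatives. Any $\beta<4$ occurring for a pair must satisfy $\beta\in\{2,3\}$, since the only primes of $2$-power $1$ are $3$ and $q_1$ (both already factors) and no Type~3 prime has $2$-power $1$. Among the known Fermat primes only $5$ has $2$-power $2$ and none has $2$-power $3$, and $5\nmid m$, so a $\beta$-pair would have to consist of one Type~2 and one Type~3 prime of the same $2$-power --- impossible, because that power would have to be at once odd (Type~2) and even (Type~3). Concretely $2^3\cdot 19661+1=11\cdot 79\cdot 181$ is composite, so no prime of $2$-power $3$ exists to form a pair. Likewise a $4$-prime cannot be Type~2 (that needs the even exponent $4$), so it would be $2^4\cdot 19661^2+1$; but $2^4\cdot 19661^2+1=3217\cdot 1922561$ is composite. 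Both alternatives fail, so there is no Carmichael number with $P=19661$ in Case~A.

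The only real computations are the two factorizations $2^3\cdot 19661+1=11\cdot 79\cdot 181$ and $2^4\cdot 19661^2+1=3217\cdot 1922561$. The step I would be most careful about is the reduction at the start --- checking from Table~\ref{table:products of fermat primes} that $3\cdot 65537$ really is the only Type~1 part compatible with $P=19661$ --- which is precisely what makes this case so short; beyond that I expect no genuine obstacle.
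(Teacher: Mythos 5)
Your proposal is correct and follows essentially the same route as the paper: the same grouping $3\cdot 39323=2^4\cdot 7373+1$, the same appeal to the minimality argument with $2^{16}\mid L$, and the same two composite numbers $2^3\cdot 19661+1=11\cdot 79\cdot 181$ and $2^4\cdot 19661^2+1=3217\cdot 1922561$ to kill the $3$-pair and $4$-prime possibilities. The only difference is that you spell out the reduction of the Type~1 part to $3\cdot 65537$ and the parity bookkeeping slightly more explicitly than the paper does.
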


\subsection{The impossible case: $P=41$}
If $P=41$, then the product of Type 1 primes is $3\cdot 5 \cdot 257$ from Table \ref{table:Candidates for P} and $k_1=l_1=1$.
So $q_1=2\cdot 41+1=83$. Note $2^8\mid L$ and $P\equiv 2 \pmod{3}$, hence $l_i$ is odd.

Let
\[m=3\cdot 5 \cdot 257\cdot 83 \prod_{i\geq 1}u_i,\]
where $u_i$ is a prime of Type 2 or Type 3.
Then we have
\[m=(2^2+1)(2^3\cdot 31+1)(2^8+1)\prod_{i \geq 1}u_i.\]
Since $s_i$ is even (Lemma \ref{lem:mod 3}), no smaller $2$-power pair $(u_1, u_2)$.
Since $l_i$ is odd, the only candidate for $2$-prime is the Type 3 $2$-prime.
However, the number
\[2^2\cdot 41^2+1=5^2\cdot 269,\]
 is not prime.
Hence we have:
\begin{theorem}
	If $P=41$, then there is no Carmichael number for Case A.
\end{theorem}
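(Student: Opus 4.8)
The plan is to mimic exactly the structure of the three preceding ``impossible case'' theorems, since $P=41$ falls under Case A with a known product of Type 1 primes. First I would record, from Table \ref{table:Candidates for P}, that $P=41$ forces the product of Fermat factors to be $R=3\cdot 5\cdot 257$ with $k_1=1$, and that in Case A we have $k_1=l_1=1$, so $q_1=2\cdot 41+1=83$. I would then write the Carmichael number as $m=3\cdot 5\cdot 257\cdot 83\cdot\prod_{i\geq1}u_i$ with each $u_i$ a Type 2 or Type 3 prime, and note $2^8\mid L$ since $257-1=2^8$. Grouping the known factors by $2$-power gives $m=(2^2+1)(2^3\cdot 31+1)(2^8+1)\prod_{i\geq1}u_i$ — here $3\cdot 83=249=2^3\cdot 31+1$ contributes $2$-power $3$, $5$ contributes $2$-power $2$, and $257$ contributes $2$-power $8$.

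Next I would invoke the minimality argument (Theorem \ref{thm:minimality}, or directly Theorem \ref{thm:power of 2}) applied to this expression: since $2^2$ is the smallest $2$-power among the displayed factors and $2^8\mid L$, the factor $2^2+1$ cannot be the unique smallest $2$-power, so either there is a $\beta$-pair among the $u_i$ with $\beta<2$, or there is a $2$-prime among the $u_i$. A pair with $2$-power $\beta<2$ would mean $\beta=1$; but Lemma \ref{lem:mod 3} applied to $P=41\equiv 2\pmod 3$ says every Type 2 prime $q_i$ has $l_i$ odd while every Type 3 prime $r_i$ has $s_i$ even, and moreover the Type 2 and Type 3 $2$-powers (being at least $1$ and at least $2$ respectively) cannot both equal $1$, so no such pair exists. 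Actually, since $s_i$ is even, any Type 3 prime has $2$-power $\geq 2$, ruling out $\beta<2$ pairs entirely; and the only possible $2$-prime is the Type 3 prime $2^2\cdot 41^2+1$. Finally I would compute $2^2\cdot 41^2+1=6725=5^2\cdot 269$, which is not prime (indeed not even squarefree), so no $2$-prime exists. This exhausts all possibilities, so no Carmichael number can exist, proving the theorem.

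The only genuinely non-routine point is making sure the case analysis from Theorem \ref{thm:minimality} is stated with the right thresholds: we have $b=1$ (the $2$-power of $3\cdot 83$ — wait, more carefully, the smallest $2$-power in the grouped expression is that of $5$, namely $2$) and $a=8$, so $b<a$ puts us in part (1), yielding either a $b$-prime (i.e.\ a $2$-prime) or a pair of strictly smaller $2$-power. I would double-check that after using $k_1=l_1=1$ for $q_1=83$, no further Type 2 prime can have $2$-power $1$ (it cannot, since $83$ is already the smallest such and all primes are distinct by squarefreeness), and that Lemma \ref{lem:mod 3} genuinely forbids Type 3 primes of $2$-power $1$. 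Everything else is a single primality check, $2^2\cdot 41^2+1=5^2\cdot 269$, which terminates the procedure. Thus the whole argument is short and parallel to the $P=43$, $P=19661$ cases, and I expect no real obstacle beyond bookkeeping the $2$-power thresholds correctly.
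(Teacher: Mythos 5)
Your proposal is correct and follows essentially the same route as the paper: the same grouping $m=(2^2+1)(2^3\cdot 31+1)(2^8+1)\prod u_i$ with $q_1=83$, the same parity argument from Lemma \ref{lem:mod 3} ($l_i$ odd, $s_i$ even for $P=41\equiv 2\pmod 3$) ruling out any pair and any Type 2 $2$-prime, and the same terminating primality check $2^2\cdot 41^2+1=5^2\cdot 269$. The only differences are cosmetic (you spell out the minimality thresholds and the squarefreeness point more explicitly than the paper does).
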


\subsection{The impossible case: $P=11$}
When $P=11$, the product of Type 1 primes is $3\cdot 257$ from Table \ref{table:Candidates for P}. Then we have $k_1=l_1=1$, and thus $q_1=2\cdot 11+1=23$.
Using $\prod_{i \geq }{u_i}$ as in the previous cases, we have
\begin{align*}
m&=3\cdot 257 \cdot 23 \prod_{i \geq 1} u_i\\
&=(2\cdot 17+1)(2^8+1)\prod_{\i \geq 1} u_i.
\end{align*}
Since $s_1$ is even by Lemma \ref{lem:mod 3}, there is no $1$-prime among $u_i$.
Thus, $2^1$ is the unique smallest power and $2^8\mid L$, we conclude that $m$ cannot be a Carmichael number in this case. 
So we have:
\begin{theorem}
	If $P=11$, then there is no Carmichael number for Case A.
\end{theorem}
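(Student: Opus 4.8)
The statement to prove is that there is no Carmichael number in Case A when $P=11$. I would follow the procedure from Section~\ref{subsec:procedure} exactly as in the preceding subsections ($P=43,19661,41$), which in this small case collapses to a single step.

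First I would pin down the Type~1 part. By Lemma~\ref{lem:P divides product of Type 1} and Corollary~\ref{cor:two Fermat's prime}, the prime $P=11$ must divide $R-1$ for a product $R$ of at least two known Fermat primes, and by Table~\ref{table:Candidates for P} the only surviving combination is $R=3\cdot 257$ with $k_1=1$. In Case~A we have $k_1=l_1\leq s_1$, so $l_1=1$ and hence $q_1=2\cdot 11+1=23$ is forced as the smallest Type~2 prime; note $11\equiv 2\pmod 3$, which is consistent with $l_1$ odd via Lemma~\ref{lem:mod 3}. Writing $m=3\cdot 257\cdot 23\cdot\prod_{i\geq 1}u_i$ with each $u_i$ a Type~2 or Type~3 prime, I would compute $3\cdot 23=(2x+1)(2x+1)$, which has $2$-power $1$ (since $3\cdot 23=69=2^2\cdot 17+1$, wait — one must be careful: $3\cdot 23=69$, and $69-1=68=2^2\cdot 17$), so actually the product $3\cdot 23$ has $2$-power $2$; I would instead just observe that $3=2+1$ alone already has $2$-power $1$, and $257=2^8+1$ has $2$-power $8$, giving the expression $m=(2\cdot 17+1)(2^8+1)\prod_{i\geq 1}u_i$ once $3\cdot 23\cdot$ (stuff) is regrouped — more simply, I keep $3$ as the unique factor of $2$-power $1$.

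Next comes the minimality argument (Step~2, i.e.\ Theorem~\ref{thm:minimality}). Since $257\mid m$ we have $2^8\mid L$, so $2^1$ cannot be the unique smallest $2$-power among the factors of $m$. Because $3$ has $2$-power $1$, Theorem~\ref{thm:minimality}(1) (with $b=1<8=a$) forces either a $1$-prime among the $u_i$ or a $\beta_1$-pair with $\beta_1<1$, and the latter is impossible since $\beta_1\geq 1$. So there must exist a $u_i$ of $2$-power exactly $1$. A $2$-power-$1$ prime is of the form $2\cdot 11+1$ (Type~2, but that is $23$, already used, and $m$ is squarefree) or $2\cdot 11^2+1$ (Type~3). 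By Lemma~\ref{lem:mod 3}(4), since $P=11\neq 3$, the $2$-power $s_i$ of any Type~3 prime is even, so no Type~3 $1$-prime exists either. Hence there is no admissible $1$-prime, $2^1$ is the unique smallest $2$-power, and Theorem~\ref{thm:power of 2} is violated — contradiction. Therefore no Carmichael number exists in Case~A with $P=11$.

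The only subtlety — and it is minor — is the bookkeeping of which small primes have already been consumed ($3,257,23$) together with the squarefreeness of Carmichael numbers, plus the parity constraint from Lemma~\ref{lem:mod 3} ruling out a fresh Type~3 prime of odd $2$-power; once those are in place the argument is purely a finiteness-of-candidates check with no computation beyond recognizing that no new prime of $2$-power $1$ is available. I would present it in three or four lines mirroring the $P=41$ subsection.
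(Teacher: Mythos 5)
Your skeleton matches the paper's (Type 1 part must be $3\cdot 257$ with $k_1=l_1=1$, hence $q_1=23$; then force a prime of small $2$-power and rule it out), but the decisive step is misapplied. In the decomposition $m=3\cdot 23\cdot 257\cdot\prod_{i\ge 1}u_i$ the factor $23=2\cdot 11+1$ has $2$-power $1$, the same as $3$, so $3$ is \emph{not} the unique factor of $2$-power $1$, and you cannot invoke Theorem \ref{thm:minimality}(1) with $b=1$: its hypothesis requires $b<b_1\le\cdots\le b_f$, which the factor $23$ violates, and if you instead count $23$ among the $v_i$, then the conclusion ``$v_1$ is a $b$-prime'' is already satisfied by $v_1=23$ and forces nothing new. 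Likewise Theorem \ref{thm:power of 2} is satisfied by the pair $(3,23)$ both of $2$-power $1$. So your assertion ``there must exist a $u_i$ of $2$-power exactly $1$'' does not follow, and the contradiction you draw from it is spurious.

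The leverage comes from the grouping you computed and then abandoned: $3\cdot 23=69=2^2\cdot 17+1$ has $2$-power $2$, so $m=(2^2\cdot 17+1)(2^8+1)\prod u_i$, the product is nonempty (a Type 3 prime must occur since $11^2\mid L$), and since $2^8\mid L$, Theorem \ref{thm:minimality}(1) with $b=2<a=8$ forces either a new $2$-prime or a $1$-pair among the $u_i$. Closing the case therefore requires checks your write-up never makes: a $1$-pair is impossible (the only $2$-power-$1$ Type 2 prime is $23$, already used, and $s_i$ is even by Lemma \ref{lem:mod 3}); a Type 2 $2$-prime is impossible since $11\equiv 2\pmod 3$ forces $l_i$ odd; and the Type 3 candidate $2^2\cdot 11^2+1=485=5\cdot 97$ is composite. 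These checks are short, but omitting the $2$-power-$2$ candidates is a genuine gap, not a stylistic shortcut. (Incidentally, the paper's own text contains the same slip: it writes $3\cdot 257\cdot 23=(2\cdot 17+1)(2^8+1)\cdots$ and declares $2^1$ the unique smallest power, whereas the correct $2$-power of $3\cdot 23$ is $2$; its argument is repaired by exactly the computation above.)
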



\section{The impossible case: $P=7$}
We still assume $k_1=l_1$ (Case A) and prove that there is no Carmichael number with $P=7$.
From Table \ref{table:Candidates for P}, there are three possibilities for the Type 1 primes: 
\[5\cdot 17, \qquad 5\cdot 65537, \qquad 5 \cdot 17 \cdot 257 \cdot 65537.\]
In any case, $k_1=l_1=2$, and thus $q_1=2^2\cdot 7+1=29$.

Let us first deal with the case when $17$ divides $m$.
\subsection{Case 1: $17$ divides $m$.}
Suppose $17$ divides $m$.
Let
\[m=5\cdot 17 \cdot 29 \prod_{i\geq 1}v_i,\]
where $v_i$ is a prime of Type 1, 2, or 3.
The only possible Type 1 primes for $v_i$ are $257$ and $65537$.
Then we have
\begin{align*}
m=(2^5\cdot 77+1)\prod_{i \geq 1}v_i.
\end{align*}
Note that $2^4\mid L$. By the minimality argument (Theorem \ref{thm:minimality}), either we must have an $n$-pair $(v_1, v_2)$ with $n< 4$, or an $n$-prime with $n=4, 5$.
Table \ref{table:P=7} lists data of the numbers of the form $2^l\cdot 7+1$, $2^s\cdot 7^2+1$, and $2^k+1$.
Note that since $7\equiv 1 \pmod{3}$, both $l_i, s_i$ are even by Lemma \ref{lem:mod 3}.
From the table, we see that there is no such a pair.
As $17$ is already used, we have a unique $4$-prime: $113$.

Hence we have
\begin{align*}
m=5 \cdot 17 \cdot 29 \cdot 113 \prod_{i \geq 2}v_i=(2^4\cdot 3\cdot 7 \cdot 829+1)\prod_{i \geq 2}v_i.
\end{align*}

Since the product still must contain a Type 3 prime, it is not empty.
By the minimality argument, there must be an $n$-pair with $n<4$ or $4$-prime, but this is impossible from Table \ref{table:P=7} as there is no more $4$-primes.

\begin{table}[ht]
\caption{Primality for $2^l\cdot 7+1$, $2^s\cdot 7^2+1$, $2^k+1$}
\centering
\begin{tabular}{|c| c| c |c |c}
\hline\hline
$n$ & $2^n\cdot 7+1$ & $2^n\cdot 7^2+1$ & $2^n+1$  \\ [0.5ex] 
\hline
$2$ & $29$ prime & $197$ prime & $5$ prime  \\
$4$ & $113$ prime & $\times$ & $17$ prime \\
$6$ & $449$ prime & $3137$ prime& $\times$ \\
$8$ &  $\times$ & $\times$ & $257$ prime \\
$10$ &  $\times$ & $50177$ prime & $\times$ \\
$12$ &  $\times$ & $\times$  & $\times$ \\
$14$ &  $114689$ prime & $\times$  & $\times$ \\
$16$ &  $\times$ & $\times$  & 65537\\[1ex]
\hline
\end{tabular}

$\times$=composite, $7\equiv 1\pmod{3}$
\label{table:P=7}
\end{table}

\subsection{Case 2: $17$ does not divide $m$.}
Suppose that $m$ is not divisible by $17$. 
Namely, the case when the type 1 combination is $5\cdot 65537$.
Then we have $k_1=l_1=2$, and $q_1=2^2\cdot 7+1=29$.
Let
\[m=5\cdot 65537 \cdot 29 \prod_{i \geq 1}u_i,\]
where $u_i$ is a prime of Type 2 or 3.
We have $2^{16}\mid L$.
Then 
\[m=(2^4\cdot 9+1)(2^{16}+1)\prod_{i \geq 1}u_i.\]
From Table \ref{table:P=7}, we see that there is no pair of $u_i$ of $2$-power smaller than $4$.
The only $4$-prime $u_1$ is $q_2=113$ with $l_2=4$ (as we assume $17\nmid m$).
Since $5\cdot 29\cdot 113=2^{14}+1$, we have
\begin{align*}
m=(2^{14}+1)(2^{16}+1)\prod_{i \geq 2}u_i
\end{align*}
By the minimality argument, we must have either a pair $(u_2, u_3)$ of Type 2 and 3 with $2$-power between $5$ and $13$, or a $14$-prime.
From Table \ref{table:P=7}, we see that $(449, 3137)$ is $6$-pair and $114689$ is a $14$-prime.

\subsubsection{Subcase 1: $(u_2, u_3)=(449, 3137)$}
If $(u_2, u_3)=(449, 3137)$, then we have
\begin{align*}
m&=5\cdot 65537 \cdot 29 \cdot 113 \cdot 449 \cdot 3137\prod_{i \geq 4}u_i\\
&=(2^9\cdot 45075167+1)(2^{16}+1)\prod_{i \geq 4}u_i.
\end{align*}
If the product is empty, then $2^9$ is the unique smallest power of $2$, and this contradicts Theorem \ref{thm:power of 2}.
Thus the product is nonempty.
Since there is no $9$-prime, and there is no $n$-pair for $7\leq n \leq 8$ from Table 
\ref{table:P=7}, this case does not happen.

\subsubsection{Subcase 2: $u_2=q_3=114689$.}
If $u_2=q_3=114689$, then we have
\begin{align*}
m&=5\cdot 65537 \cdot 29 \cdot 113 \cdot 114689 \prod_{i \geq 3}u_i\\
&=(2^{16}+1)(2^{17}\cdot3^5\cdot 59+1)\prod_{i \geq 3}u_i.
\end{align*}
Note that the product is not empty since it contains a Type 3 prime.
There is no $16$-prime from Table \ref{table:P=7}.
The only $n$-pair is $(449, 3137)$ with $n<16$ but we dealt with this case in Subcase 1.

Thus, there is no Carmichael numbers for Case 2 as well.
This completes the proof of:
\begin{theorem}
	If $P=7$, then there is no Carmichael number for Case A.
\end{theorem}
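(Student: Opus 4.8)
The plan is to carry out the procedure of Section~\ref{subsec:procedure} with $P=7$, exhausting all branches. Since we are in Case~A we have $k_1=l_1$, and from Table~\ref{table:Candidates for P} the Type~1 product $R$ must be one of $5\cdot 17$, $5\cdot 65537$, or $5\cdot 17\cdot 257\cdot 65537$; in every case $k_1=l_1=2$, so the smallest Type~2 prime is forced to be $q_1=2^2\cdot 7+1=29$. Throughout, the key finiteness input is that, by Lemma~\ref{lem:mod 3} (since $7\equiv 1\pmod 3$), both $l_i$ and $s_i$ are even, and Type~1 exponents $k_i$ are powers of $2$; hence the only admissible $2$-powers for Type~2, Type~3, and Type~1 ($\geq 4$) primes are even, and the finitely many primality data we need are tabulated in Table~\ref{table:P=7}. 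The engine that makes the search terminate is Theorem~\ref{thm:minimality} (the minimality argument): at each stage, after collapsing the known factors into a product $(2^{b_1}x+1)\cdots(2^{b_f}x+1)$ with $b_1$ the least $2$-power, either the remaining product is empty (and we have a Carmichael number, or a contradiction because a Type~3 prime is still required), or there must exist an $n$-pair with $n<b_1$ or an $n$-prime with $a\le n\le b_1$, where $2^a\parallel$ the $L$ forced by the Fermat factors. Because each such $n$ is even and bounded, and Table~\ref{table:P=7} lists which $2^l\cdot 7+1$, $2^s\cdot 7^2+1$, $2^k+1$ are prime, only finitely many branches arise.

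First I would split on whether $17\mid m$. If $17\mid m$: write $m=5\cdot 17\cdot 29\prod v_i=(2^5\cdot 77+1)\prod v_i$; since $2^4\mid L$, by the minimality argument we need an $n$-pair with $n<4$ (impossible, as no even $n$ with $2<n<4$ exists) or an $n$-prime with $n\in\{4,5\}$. The only candidate is the Type~1 prime $113$ (the $4$-prime), since $17$ is already used. Then $5\cdot 17\cdot 29\cdot 113=2^4\cdot 3\cdot 7\cdot 829+1$ and the residual product still contains a Type~3 prime, hence is nonempty; the minimality argument again demands a further $4$-prime, and Table~\ref{table:P=7} shows there is none. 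So no Carmichael number arises when $17\mid m$.

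If $17\nmid m$, the Type~1 product is $5\cdot 65537$, so $2^{16}\mid L$, $q_1=29$, and $m=(2^4\cdot 9+1)(2^{16}+1)\prod u_i$ with $u_i$ of Type~2 or~3. No $n$-pair with $n<4$ exists (parity), so the only $4$-prime available is $q_2=113$ (Type~2, with $l_2=4$, since $17\nmid m$ rules out the Type~1 option). Then $5\cdot 29\cdot 113=2^{14}+1$, giving $m=(2^{14}+1)(2^{16}+1)\prod u_i$, and the minimality argument forces an $n$-pair with $5\le n\le 13$ (so $n\in\{6,8,10,12\}$, $n$ even) or a $14$-prime. From Table~\ref{table:P=7} the $6$-pair $(449,3137)$ and the $14$-prime $114689$ are the only options, so two subcases remain. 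In Subcase~1, multiplying in $449\cdot 3137$ produces $(2^9\cdot(\text{odd})+1)(2^{16}+1)\prod u_i$ with the residual product nonempty (else $2^9$ is the unique smallest $2$-power); but there is no $9$-prime and no $n$-pair with $7\le n\le 8$, a contradiction. In Subcase~2, multiplying in $114689=2^{14}\cdot 7+1$ (so $l_3=14$) gives $(2^{16}+1)(2^{17}\cdot(\text{odd})+1)\prod u_i$ with the product still nonempty (Type~3 prime needed); there is no $16$-prime, and the only $n$-pair with $n<16$ is the already-handled $(449,3137)$, a contradiction. Hence no Carmichael number arises when $17\nmid m$ either, and the theorem follows.

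The main obstacle is not any single step but bookkeeping: one must be certain that every branch of the minimality argument has been accounted for, that the parity constraints from Lemma~\ref{lem:mod 3} correctly eliminate odd $2$-powers, and that whenever the residual product is claimed nonempty this is justified by the presence of a required Type~3 prime together with the fact that $2^\alpha\mid L$ for a large enough $\alpha$ coming from the Fermat factors. The primality/compositeness assertions underlying Table~\ref{table:P=7} (e.g. $2^4\cdot 7^2+1$ composite, $2^8\cdot 7+1$ composite) are routine but must be verified, since the entire termination of the search rests on the sparsity of primes among $2^l\cdot 7+1$ and $2^s\cdot 7^2+1$ in the small range of exponents that the minimality argument confines us to.
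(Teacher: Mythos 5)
Your proposal is correct and follows essentially the same route as the paper: the same split on $17\mid m$, the same forced factors $29$ and $113$ via parity (Lemma \ref{lem:mod 3}) and the minimality argument, the identity $5\cdot 29\cdot 113=2^{14}+1$, and the same two terminal subcases with the $6$-pair $(449,3137)$ and the $14$-prime $114689$. The only blemish is labeling $113=2^4\cdot 7+1$ as a Type~1 prime (it is Type~2), which does not affect the argument.
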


\section{The impossible case: $P=127$}
Next, we prove that there is no Carmichael number when $P=127$ for $k_1=l_1 \leq s_1$ (Case A).

From Table \ref{table:Candidates for P}, there is only one Type 1 combination: $5\cdot 17 \cdot 257$ with $k_1=l_1=2$.
So we have $q_1=2^2\cdot 127+1=509$.
Let
\[m=5\cdot 17 \cdot 257 \cdot 509 \prod_{i \geq 1} u_i,\]
where $u_i$ is a prime of Type 2 or Type 3.
We have $2^8\mid L$.
Then we have
\[m=(2^9\cdot 3^2\cdot 19 \cdot 127+1)\prod_{i \geq 1} u_i.\]

Since $P=127\equiv 1 \pmod{3}$, we know that $l_i, s_i$ are even.
Since the product contains a Type 3 prime, it is not empty.
By the minimality argument (Theorem \ref{thm:minimality}), there is an $n$-pair with $n<8$ or an $n$-prime with $n=8, 9$.
From Table \ref{table:P=127}, we see that there is no such an $n$-pair or an $n$-prime.
Thus, $m$ cannot be a Carmichael number.

\begin{table}[ht]
\caption{Primality for $2^l\cdot 127+1$, $2^s\cdot 127^2+1$}
\centering
\begin{tabular}{|c| c| c |c }
\hline\hline
$n$ & $2^n\cdot 127+1$ & $2^n\cdot 127^2+1$  \\ [0.5ex] 
\hline
$2$ & $509$ prime & $\times$    \\
$4$ & $\times$ & $\times$  \\
$6$ & $\times$ & $\times$ \\
$8$ &  $\times$ & $\times$  \\
$10$ &  $\times$ & $\times$  \\
$12$ &  $520193$ prime & $\times$   \\
$14$ &  $\times$  & $\times$   \\
$16$ &  $\times$ & $\times$  \\[1ex]
\hline
\end{tabular}

$\times$=composite, $127\equiv 1\pmod{3}$
\label{table:P=127}
\end{table}
(We remark that the first two values of $n$ for which $2^n\cdot 127^2+1$ is prime are $n=42, 98$.)

Thus we obtain:
\begin{theorem}
	If $P=127$, then there is no Carmichael number for Case A.
\end{theorem}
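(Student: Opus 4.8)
The plan is to run the procedure of Section~\ref{subsec:procedure} once, for $P=127$ in Case~A. From Table~\ref{table:Candidates for P} the only admissible product of Type~1 primes is $5\cdot 17\cdot 257$, and since we are in Case~A we have $k_1=l_1=2$, which forces the Type~2 prime $q_1=2^2\cdot 127+1=509$. Hence every Carmichael number under consideration has the form
\[m=5\cdot 17\cdot 257\cdot 509\cdot\prod_{i\geq 1}u_i,\]
where each $u_i$ is a Type~2 or Type~3 prime; the product is nonempty because $L=2^{\alpha}P^2$ forces some prime factor $r$ with $P^2\mid r-1$, i.e.\ a Type~3 prime. A direct multiplication gives $5\cdot 17\cdot 257\cdot 509=2^9\cdot 3^2\cdot 19\cdot 127+1$, so the known block has $2$-power $9$, while $2^8\mid L$ since $257-1=2^8$.

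Next I would apply the minimality argument to the expression $m=(2^9\cdot 3^2\cdot 19\cdot 127+1)\prod_{i\geq 1}u_i$. Here the leading $2$-power is $b=9$ and $a=8$, so $b\geq a$, and Theorem~\ref{thm:minimality}(2) says that some $u_i$ is an $n$-prime with $n\in\{8,9\}$, or there is an $n$-pair $(u_1,u_2)$ with $n<8$. Since $127\equiv 1\pmod 3$ and $127\neq 3$, Lemma~\ref{lem:mod 3} shows every $l_i$ and every $s_i$ is even, so the $2$-power of each $u_i$ is even. This rules out a $9$-prime outright and forces any $n$-pair to have $n\in\{2,4,6\}$. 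The case $n=2$ is immediately impossible: the only Type~2 candidate with $2$-power $2$ is $509$, already used, and the Type~3 candidate $2^2\cdot 127^2+1$ is composite by Table~\ref{table:P=127}.

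It then remains to consult Table~\ref{table:P=127}: for $n\in\{4,6\}$ both $2^n\cdot 127+1$ and $2^n\cdot 127^2+1$ are composite, so there is no $n$-pair; and for $n=8$ both $2^8\cdot 127+1$ and $2^8\cdot 127^2+1$ are composite, so there is no $8$-prime. (In the relevant range the only primes among these values are $509$ at $n=2$, already used, and $520193$ at $n=12$, which is too large to produce a pair or an $8$-prime.) Since no admissible $u_i$ exists, the product cannot contain the required Type~3 factor, and $m$ cannot be a Carmichael number; this gives the theorem. The structural content is a single pass of the minimality argument together with Lemma~\ref{lem:mod 3}, so the only real work — and the only place an error could creep in — is verifying the handful of factorizations recorded in Table~\ref{table:P=127}.
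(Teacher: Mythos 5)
Your proposal is correct and follows essentially the same route as the paper: the same Type 1 combination $5\cdot 17\cdot 257$ with $q_1=509$, the same computation $5\cdot 17\cdot 257\cdot 509=2^9\cdot 3^2\cdot 19\cdot 127+1$, and the same single pass of the minimality argument (Theorem \ref{thm:minimality}) combined with Lemma \ref{lem:mod 3} and the primality data of Table \ref{table:P=127}. Your extra bookkeeping (explicitly ruling out the $9$-prime by parity and the $n$-pairs at $n=2,4,6$) is just a spelled-out version of the paper's appeal to the table, so no difference in substance.
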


\section{Case: $P=5$}
The combinations of Type 1 primes for $P=5$ are the following three from Table \ref{table:Candidates for P}.
\[3 \cdot 17, \qquad 3\cdot 257, \qquad 3 \cdot 65537.\]
So in any case, we have $k_1=l_1=1$, and hence $q_1=2\cdot 5+1=11$.
Since $P=5\equiv 2 \pmod{3}$, we know that $l_i$ is odd and $s_i$ is even.
In all cases, we have $2^4 \mid L$.

Let
\[m=3\cdot 11 \prod_{i \geq 1}v_i=(2^5+1)\prod_{i \geq 1}v_i,\]
where each $v_i$ is a prime of Type 1, 2, or 3.
If $v_i$ is a Type 1 prime, then it must be one of $17, 257, 65537$, and the other $v_j$ is not Type 1.

\begin{table}[ht]
\caption{Primality for $2^l\cdot 5+1$, $2^s\cdot 5^2+1$}
\centering
\begin{tabular}{|c| c| c |c |}
\hline\hline
$n$ & $2^n\cdot 5+1$ & $2^n\cdot 5^2+1$ & $2^n+1$ \\ [0.5ex] 
\hline
$1$ & $11$ prime & $\times$ & $3$ prime    \\
$2$ & $\times$ & $101$ prime  & $5$ prime \\
$3$ & $41$ prime & $\times$  & $\times$\\
$4$ &  $\times$ & $401$ prime & $17$ prime  \\
$5$ &  $\times$ & $\times$  & $\times$\\
$6$ &  $\times$ & $1601$ prime  & $\times$ \\
$7$ &  $641$ prime  & $\times$ & $\times$  \\
$8$ &  $\times$ & $\times$ & $257$ prime \\[1ex]
\hline
\end{tabular}

$\times$=composite, $5\equiv 2\pmod{3}$
\label{table:P=5}
\end{table}

As $2^4 \mid L$, by the minimality argument (Theorem \ref{thm:minimality}), there must be an $n$-pair $n<4$ or an $n$-prime for $n=4, 5$.
From Table \ref{table:P=5}, there is no such an $n$-pair or $5$-prime.
There are two $4$-primes: $401$ and $17$.

\subsection{Case 1: $r_1=401$.}
Let $r_1=401$. Then we have
\[m=3\cdot 11 \cdot 401 \prod_{i \geq 2}v_i=(2^4\cdot 827+1)\prod_{i \geq 2}v_i.\]
If the product is empty, then $m$ is not a Carmichael number since $5\nmid m-1$.
By the minimality argument, there must be the other $4$-prime $17$.
So we have
\begin{align*}
m&=3\cdot 11 \cdot 17 \cdot 401 \prod_{i \geq 3}v_i=(2^6\cdot 5\cdot 19 \cdot 37+1)\prod_{i \geq 3}v_i.
\end{align*}

If the product is empty, then $m$ is not a Carmichael since $5^2 \nmid m-1$.
There is no $n$-pairs for $n <4$ and there is no $n$-prime with $n=4, 5$.
We have a unique $6$-prime: $1601$.
So we have
\begin{align*}
m&=3\cdot 11 \cdot 17 \cdot 401 \cdot 1601 \prod_{i \geq 4}v_i=(2^8\cdot 5\cdot 53 \cdot 5309+1)\prod_{i \geq 4}v_i.
\end{align*}

If the product is empty, then $m$ is not a Carmichael as $5^2 \nmid m-1$.
By the minimality argument, we must have $n$-prime with $n=5, 6, 7, 8$. The only possible case is a $7$-prime: $641$.
We have
\begin{align*}
m&=3\cdot 11 \cdot 17 \cdot 401 \cdot 1601 \cdot 641 \prod_{i \geq 5}v_i\\
&=(2^7\cdot 5^2\cdot 53 \cdot72145063 +1)\prod_{i \geq 5}v_i.
\end{align*}
If the product is empty, then
\[\boxed{3\cdot 11 \cdot 17 \cdot 401  \cdot 641 \cdot 1601}\]
is a Carmichael number since $L=2^7\cdot 5^2$ divides $m-1=2^7\cdot 5^2\cdot 53 \cdot72145063$.
In fact, the product must be empty since we used all $n$-pairs with $n <4$ and $n$-primes for $4\leq n \leq 7$.

\subsection{Case 2: $p_2=17$.}
Let us now consider the case $p_2=17$.
 Then we have
\[m=3\cdot 11 \cdot 17 \prod_{i \geq 2}v_i=(2^4\cdot 5 \cdot+1)\prod_{i \geq 2}v_i.\]
Since there is no possible pair, we must have the other $4$-prime: $401$.
Hence this case reduces to the previous case.

This proves:
\begin{theorem}
	When $P=5$, there is exactly one Carmichael number $m$ with $L=2^{\alpha}\cdot 5^2$ in Case A.
	The factorization of $m$ is given by
	\[m=3\cdot 11 \cdot 17 \cdot 401  \cdot 641 \cdot 1601.\]
\end{theorem}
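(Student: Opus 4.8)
The plan is to run the procedure of Section~\ref{subsec:procedure}, bootstrapping with the minimality argument (Theorem~\ref{thm:minimality}). First I would pin down the forced data. Since $P=5$ and $m$ has a Fermat factor, Lemma~\ref{lem:P divides product of Type 1} and Table~\ref{table:Candidates for P} leave only the Type~1 combinations $3\cdot 17$, $3\cdot 257$, $3\cdot 65537$, all with $k_1=1$ (in particular $5\nmid m$), and $2^4\mid L$ in each. So in Case~A, $k_1=l_1=1$ forces $q_1=2\cdot 5+1=11$. Since $5\equiv 2\pmod 3$, Lemma~\ref{lem:mod 3} forces every Type~2 exponent to be odd and every Type~3 exponent to be even. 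Hence every candidate has the form $m=3\cdot 11\cdot\prod_i v_i=(2^5+1)\prod_i v_i$, with each $v_i$ of Type~1 (then one of $17,257,65537$), $2$, or $3$; one need not case on which of the three combinations occurs, since the argument below forces $17\mid m$ and hence the combination $\{3,17\}$.

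Next I would tabulate primality of $2^l\cdot 5+1$, $2^s\cdot 5^2+1$, and $2^k+1$ for small exponents (Table~\ref{table:P=5}). Applying Theorem~\ref{thm:minimality} to $(2^5+1)\prod_i v_i$ with $a=4$, $b=5$ gives that there is either an $n$-pair with $n<4$ or an $n$-prime with $n\in\{4,5\}$. The parity constraints together with the table eliminate all $n$-pairs with $n<4$, and there is no $5$-prime, so the only options are the two $4$-primes $r_1=401$ and $p_2=17$. The branch $p_2=17$ collapses at once to the branch $r_1=401$: once $17$ is adjoined the only remaining $4$-prime is $401$. So the real work is the branch $r_1=401$.

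In that branch I would iterate Steps~2--3: after appending a factor, recompute the $2$-power of the accumulated term, use Theorem~\ref{thm:minimality} to conclude the leftover product cannot be empty until Korselt's criterion $L\mid m-1$ holds, and read off from the table that the next forced factor is unique. Concretely, $3\cdot 11\cdot 401$ has $2$-power $4$; adjoining the remaining $4$-prime $17$ gives $2$-power $6$; the unique $6$-prime $1601$ then gives $2$-power $8$; the only remaining candidate at that stage is the $7$-prime $641$, and after adjoining it the accumulated term is $2^7\cdot 5^2\cdot(\text{odd})+1$, so $L=2^7\cdot 5^2$ divides $m-1$. By then all $n$-pairs with $n<4$ and all $n$-primes with $4\le n\le 7$ have been consumed and squarefreeness forbids repetition, so the leftover product is empty, giving the single Carmichael number
\[
m=3\cdot 11\cdot 17\cdot 401\cdot 641\cdot 1601 .
\]

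The main obstacle I anticipate is bookkeeping rather than any deep idea: at each stage one must track precisely which $n$-primes and $n$-pairs are still available — Type~1 factors come from a fixed finite list, no prime may be reused, and adjoining $17$ forecloses $257$ and $65537$ — and one must correctly recompute the $2$-adic valuation of the growing product after each step, since it is that valuation which drives Theorem~\ref{thm:minimality} at the following step. A secondary task is to check the factorizations underlying Table~\ref{table:P=5} and to confirm that the terminal $m$ really does satisfy $L\mid m-1$.
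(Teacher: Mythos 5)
Your proposal is correct and follows essentially the same route as the paper: the same reduction to $q_1=11$ with $2^4\mid L$ and the parity constraints from Lemma \ref{lem:mod 3}, the same use of Table \ref{table:P=5} with the minimality argument to force the chain $\{401,17\}\to 1601\to 641$, the same collapsing of the $17$-first branch into the $401$-first branch, and the same final check that $L=2^7\cdot 5^2$ divides $m-1$ with all admissible primes and pairs exhausted. The only cosmetic difference is that the paper verifies nonemptiness of the leftover product at each intermediate stage by noting Korselt fails (e.g.\ $5\nmid m-1$ or $5^2\nmid m-1$), which is exactly the bookkeeping you flag as the remaining work.
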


\section{Case: $P=3$}
In this section, we consider the case A ($k_1=l_1\leq s_1)$ with $P=3$.
From Table \ref{table:Candidates for P}, there are four combinations of Type 1 primes for $P=3$:
\[5\cdot 17, \qquad 5\cdot 257, \qquad 5\cdot 65537, \qquad 5 \cdot 17  \cdot 257 \cdot 65537.\]
In any case, we have $k_1=l_1=2$, and thus $q_1=2^2\cdot 3+1=13$.
We also have $2^{4} \mid 2^{k_2} \mid L$. 
Recall that since $P=3$, the exponents $l_i, s_i$ cannot be restricted by their parities. 

We first deals with $5 \cdot 17$ and $5\cdot 17 \cdot 257 \cdot 65537$ together, and we consider the rest two cases individually.

%

\subsection{Case 1: $17 \mid m$}
Suppose that $17 \mid m$.
Let
\[m=5\cdot 17 \cdot 13 \prod_{i \geq 1} v_i=(2^4\cdot 3\cdot 23+1)\prod_{i \geq 1} v_i,\]
where each $v_i$ is a prime of Type 1, 2, or 3.

\begin{table}[ht]
\caption{Primality for $2^l\cdot 3+1$, $2^s\cdot 3^2+1$}
\centering
\begin{tabular}{|c| c| c |c |}
\hline\hline
$n$ & $2^n\cdot 3+1$ & $2^n\cdot 3^2+1$ & $2^n+1$ \\ [0.5ex] 
\hline
$1$ & $7$ prime & $19$ prime & $3$ prime    \\
$2$ & $13$ prime & $37$ prime  & $5$ prime \\
$3$ & $\times$ & $73$ prime  & $\times$\\
$4$ &  $\times$ & $\times$ & $17$ prime  \\
$5$ &  $97$ prime & $\times$  & $\times$\\
$6$ &  $193$ prime & $577$ prime  & $\times$ \\
$7$ &  $\times$  & $1153$ prime & $\times$  \\
$8$ &  $769$ prime & $\times$ & $257$ prime \\
$9$ &  $\times$ & $\times$ & $\times$  \\
$10$ &  $\times$ & $\times$ & $\times$  \\
$11$ &  $\times$ & $18433$ prime & $\times$  \\
$12$ &  $12289$ prime & $\times$ & $\times$  \\
$13$ &  $\times$ & $\times$ & $\times$  \\
$14$ &  $\times$ & $147457$ prime & $\times$  \\
$15$ &  $\times$ & $\times$ & $\times$  \\
$16$ &  $\times$ & $\times$ & $65537$  prime\\
$17$ &  $\times$ & $1179649$ prime & $\times$  \\
$18$ &  $786433$ prime & $\times$ & $\times$  \\
$|$ &  $\times$  & $\times$ & $\times$  \\
$30$ &  $3221225473$ prime & $\times$ & $\times$  \\[1ex]
\hline
\end{tabular}

$\times$=composite
\label{table:P=3}
\end{table}

After removing the used primes from Table \ref{table:P=3}, we see that there is no $n$-pair with $n<4$.
Also, since $17$ is already used, there is no $4$-prime.
Thus, in this case there is no Carmichael number.

\subsection{Case 2: Type 1 combination is $5\cdot 257$}
We next consider the case when the Type 1 combination is $5\cdot 257$.
Let
\[m=5\cdot 257 \cdot 13 \prod_{i\geq 1} u_i=(2^6+1)(2^8+1) \prod_{i\geq 1} u_i,\]
where each $u_i$ is a prime of Type 2 or 3.
We have $2^8\mid L$.

By the minimality argument, we must have an $n$-pair with $n<6$ or a $6$-prime.
From Table \ref{table:P=3}, we find that there is no $n$-pair with $n<6$.
There are two $6$-primes: $193$ and $577$.

\subsubsection{Subcase 1: $q_2=193$}
Suppose that $q_2=193$ with $l_2=6$.
Then we have
\begin{align*}
m&=5\cdot 257 \cdot 13 \cdot 193 \prod_{i\geq 2} u_i=(2^9\cdot 3\cdot 2099+1)\prod_{i\geq 2} u_i.
\end{align*}

The product contains a Type 3 prime, so it is not empty.
By the minimality argument, there is either an $n$-pair $n<8$ or a $8$- or $9$-prime.
According to Table \ref{table:P=3}, there we only have the $8$-prime $769$.
Then
\begin{align*}
m&=5\cdot 257 \cdot 13 \cdot 193 \cdot 769 \prod_{i\geq 3} u_i
=(2^8\cdot 3\cdot 102\cdot 31963+1)\prod_{i\geq 3} u_i.
\end{align*}
As the product contains a Type 3 prime, it is not empty.
Then there must be an $n$-pair with $n<8$ or an $8$-prime by the minimality argument.
However, there is no such a pair or $8$-prime.
Hence $q_2\neq 193$.

\subsubsection{Subcase 2: $r_1=577$}
Suppose that $r_1=577$ with $s_1=6$.
Then we have
\begin{align*}
m&=5\cdot 257 \cdot 13 \cdot 577 \prod_{i\geq 2} u_i=(2^7\cdot 293+1)(2^8+1)\prod_{i\geq 2} u_i.
\end{align*}
It follows from the minimality arguments that we must have a $7$-prime as there is no $n$-pair $n<7$.
Thus we have $r_2=1153$ with $s_2=7$, and
\begin{align*}
m&=5\cdot 257 \cdot 13 \cdot 577 \cdot 1153 \prod_{i\geq 3} u_i\\
&=(2^{11}\cdot 3^2\cdot 602947+1)\prod_{i\geq 3} u_i.
\end{align*}

If the product $\prod_{i\geq 3} u_i=1$, then
\[\boxed{m=5\cdot 13 \cdot 257 \cdot 577 \cdot 1153}\]
is a Carmichael number since $L=2^8\cdot 3^2$ divides $m-1=2^{11}\cdot 3^2\cdot 602947$.
If the product is nonempty, then there is an $n$-prime with $n=7, 8, 9, 10, 11$.
From Table \ref{table:P=3}, there are two cases to consider: $8$-prime $769$ and $11$-prime $18433$.

\paragraph{Subsubcase 1: $q_2=769$.}
Suppose that $q_1=769$ with $l_2=8$.
Then
\begin{align*}
m&=5\cdot 257 \cdot 13 \cdot 577 \cdot 1153 \cdot 769 \prod_{i\geq 3} u_i\\
&=(2^{8}\cdot 3\cdot 2113 \cdot 5266441+1)\prod_{i\geq 3} u_i.
\end{align*}
The product must be empty otherwise the minimality argument require more $n$-pairs with $n < 8$ or $8$-primes, but we used all relevant primes from Table \ref{table:P=3}.
If the product is empty, then $m$ is not a Carmichael number as $3^2 \nmid m-1$.

\paragraph{Subsubcase 2: $r_3=18433$.}
Next, suppose that $r_3=18433$ with $s_3=11$.
Then
\begin{align*}
m&=5\cdot 257 \cdot 13 \cdot 577 \cdot 1153 \cdot 18433 \prod_{i\geq 3} u_i\\
&=(2^{13}\cdot 3^2\cdot 11 \cdot 29 \cdot 8710127+1)\prod_{i\geq 3} u_i.
\end{align*}
If the product is empty, then 
\[\boxed{m=5 \cdot 13 \cdot 257\cdot 577 \cdot 1153 \cdot 18433}\]
is a Carmichael number since $L=2^{11}\cdot 3^2$ divides $m-1=2^{13}\cdot 3^2\cdot 11 \cdot 29 \cdot 8710127$.

Suppose the product is nonempty. Since $2^{11}\mid L$, there is an $n$-prime with $n=11, 12, 13$.
This yields the $12$-prime $12289$ and we have
\begin{align*}
m&=5\cdot 257 \cdot 13 \cdot 577 \cdot 1153 \cdot 18433 \cdot 12289 \prod_{i\geq 4} u_i\\
&=(2^{12}\cdot 3 \cdot 13477 \cdot 15201615259 \cdot 8710127+1)\prod_{i\geq 4} u_i.
\end{align*}

As there is no $12$-prime and a possible pair, the product is empty.
But then $m$ is not a Carmichael number as $3^2\nmid m-1$.

In summary, there are exactly two Carmichael numbers
\[\boxed{5\cdot 13 \cdot 257 \cdot 577 \cdot 1153} \text{ and } \boxed{5 \cdot 13 \cdot 257 \cdot 577 \cdot 1153 \cdot 18433}\]
in Case 2.

\subsection{Case 3: Type 1 combination is $5\cdot 65537$}
Finally, we consider the case when the Type 1 combination is $5\cdot 65537$.
We have $q_1=2^2\cdot 3+1=13$.
Let
\[m=5\cdot 65537 \cdot 13 \prod_{i\geq 1}u_i=(2^6+1)(2^{16}+1)\prod_{i\geq 1}u_i,\]
where each $u_i$ is a prime of Type 2 or 3. We have $2^{16}\mid L$.

By the minimality argument, the product $\prod_{i\geq 1}u_i$ must contain either an $n$-pair with $n<5$ or a $6$-prime.
We see from Table \ref{table:P=3} that there is no such pair.
There are two $6$-primes: $193$ and $577$.
Now we consider these two cases individually.

\subsubsection{Subcase 1: $q_2=193$}
Let us consider the case $q_2=193$ with $l_2=6$.
Then we have
\begin{align*}
m&=5\cdot 65537 \cdot 13 \cdot 193 \prod_{i\geq 2}u_i=(2^8\cdot 7^2+1)(2^{16}+1)\prod_{i\geq 2}u_i.
\end{align*}
As there is no $n$-pair with $n<7$, we must have a $8$-prime: $q_3=769$ with $l_3=8$.
Then we have
\begin{align*}
m&=5\cdot 65537 \cdot 13 \cdot 193 \cdot 769 \prod_{i\geq 3}u_i=(2^{10}\cdot 9421+1)(2^{16}+1)\prod_{i\geq 3}u_i.
\end{align*}

As the product contains a Type 3 prime, it is nonempty.
Since there is no $n$-pair with $n<10$ and there is no $10$-prime for $\prod_{i\geq 3}u_i$, this case does not happen.

\subsubsection{Subcase 1: $r_1=577$}
Next, suppose that $r_1=577$ with $s_1=6$.
Then we have
\begin{align*}
m&=5\cdot 65537 \cdot 13 \cdot 577 \prod_{i\geq 2}u_i=(2^7\cdot 293+1)(2^{16}+1)\prod_{i\geq 2}u_i.
\end{align*}
Since $2^{16}\mid L$, the product $\prod_{i\geq 2}u_i\neq 1$, otherwise $7$ is the unique minimal power of $2$.
By the minimality argument with Table \ref{table:P=3} yields that there must be the $7$-prime $1153$.
Thus $r_2=1153$ with $s_2=7$, and we have
\begin{align*}
m&=5\cdot 65537 \cdot 13 \cdot 577 \cdot 1153 \prod_{i\geq 3}u_i\\
&=(2^8\cdot 31 \cdot 5449+1)(2^{16}+1)\prod_{i\geq 3}u_i.
\end{align*}

Again the minimality argument implies that there is a $8$-prime.
So $q_2=769$ with $l_2=8$, and we have
\begin{align*}
m&=5\cdot 65537 \cdot 13 \cdot 577 \cdot 1153 \cdot 769 \prod_{i\geq 4}u_i\\
&=(2^9\cdot 11 \cdot 29\cdot 97 \cdot 2099+1)(2^{16}+1)\prod_{i\geq 4}u_i.
\end{align*}

Since there is no $n$-pair with $n<8$ and there is no $9$-prime from Table \ref{table:P=3}, the product $\prod_{i\geq 4}u_i=1$. But then $2^9$ is the unique smallest power of $2$, which contradicts Theorem \ref{thm:power of 2}. Hence $m$ is not a Carmichael number in this case.

In conclusion, we obtain:
\begin{theorem}
	Let $m$ be a Carmichael number with $L=2^{\alpha}3^2$ for some $\alpha\in \N$.
	Assume that $m$ has at least one Fermat prime factor and $k_1=l_1\leq s_1$.
	Then $m$ is one of the following two Carmichael numbers:
	\[\boxed{5\cdot 13 \cdot 257 \cdot 577 \cdot 1153} \text{ and } \boxed{5\cdot 257 \cdot 13 \cdot 577 \cdot 1153 \cdot 18433}.\]
\end{theorem}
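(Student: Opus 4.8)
The proof assembles the three subcases already distinguished above, so the plan is to verify that together they are exhaustive and that they produce exactly the two listed numbers. First I would record the input: since $P=3$ and $m$ has a Type~1 factor, Lemma~\ref{lem:P divides product of Type 1} and Corollary~\ref{cor:two Fermat's prime} together with Table~\ref{table:Candidates for P} force the product of the Type~1 prime divisors of $m$ to be one of $5\cdot 17$, $5\cdot 257$, $5\cdot 65537$, or $5\cdot 17\cdot 257\cdot 65537$. In each of these the minimal Type~1 exponent is $k_1=2$, so the hypothesis $k_1=l_1$ gives $q_1=2^2\cdot 3+1=13$, which is prime, and $2^4\mid 2^{k_2}\mid L$ (with $2^{16}\mid L$ whenever $65537\mid m$). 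I would then split into the three cases: (1)~$17\mid m$, which covers $5\cdot 17$ and $5\cdot 17\cdot 257\cdot 65537$ at once; (2)~the Type~1 part is $5\cdot 257$; (3)~the Type~1 part is $5\cdot 65537$.

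For each case I would run the procedure of Section~\ref{subsec:procedure}: starting from the known factors (the relevant Fermat primes and $13$), group them into a product $(2^{a_1}x+1)\cdots(2^{a_n}x+1)$ with $a_1$ least, record the power of $2$ dividing $L$, and apply the minimality argument (Theorem~\ref{thm:minimality}) to force either an $n$-pair with $n<a_1$ or an $n$-prime with $a\le n\le a_1$; whenever a new prime is adjoined, recompute the $2$-power of the accumulated product and iterate. The only external data needed is the primality table for $2^l\cdot 3+1$, $2^s\cdot 3^2+1$, and $2^k+1$, namely Table~\ref{table:P=3}; crucially, because $P=3$ we cannot use the parity constraints on $l_i,s_i$ from Lemma~\ref{lem:mod 3}, so this table must be consulted directly at every branch. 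Each branch ends either in a contradiction (a required $n$-pair or $n$-prime is unavailable in the table, or the residual product is forced empty yet $L\nmid m-1$) or in a genuine Carmichael number once the residual product is forced empty and the divisibility checks out.

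Carrying the procedure through, case~(1) collapses at once because $17$ has already been consumed and the table leaves no other $4$-prime; case~(3) branches on the two $6$-primes $193$ and $577$, and in each branch the run is killed by the absence of a needed $9$- or $10$-prime; case~(2) again branches on $193$ and $577$, the $193$-branch dies for lack of an available $8$-prime at the second stage, and the $577$-branch, after a forced $7$-prime $1153$, yields $m=5\cdot 13\cdot 257\cdot 577\cdot 1153$ when the residual product is empty, and, after a further forced $11$-prime $18433$, yields $m=5\cdot 13\cdot 257\cdot 577\cdot 1153\cdot 18433$ when that product is empty, while the two remaining sub-branches (adjoining $769$, respectively $12289$) leave $3^2\nmid m-1$. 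Combining the three cases gives the theorem.

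The main obstacle is not any single computation but the exhaustiveness bookkeeping. Since Lemma~\ref{lem:mod 3} gives no parity restriction for $P=3$, Table~\ref{table:P=3} has far more surviving exponents than in the cases $P=5,7,127$, so the branching tree in case~(2) is genuinely broad; at every terminal node one must argue---invoking squarefreeness of Carmichael numbers to forbid re-using a prime---that \emph{every} admissible $n$-pair or $n$-prime has already been accounted for before concluding that the residual product must be empty. Getting this accounting right at each node, rather than any individual divisibility check, is where the care is required.
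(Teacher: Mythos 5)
Your proposal follows essentially the same route as the paper: the identical three-way split on the Type~1 combination ($17\mid m$, $5\cdot 257$, $5\cdot 65537$) with $q_1=13$ forced by $k_1=l_1=2$, the same iterated use of the minimality argument against Table~\ref{table:P=3}, and the same branch outcomes, including the two surviving Carmichael numbers in the $5\cdot 257$ case and the failures of the $769$ and $12289$ extensions via $3^2\nmid m-1$. The proof is correct and matches the paper's argument in structure and detail.
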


This exhausts all the possible $P=3, 5, 7, 11, 41, 43, 127, 19661$, and we complete the classification of Carmichael numbers for Case A ($k_1=l_1\leq s_1$).

\section{Case B: $k_1=s_1 < l_1$}
We next consider Case B.
We assume that $k_1=s_1< l_1$.

The first step is to reduce the number of possible $(P, k_1)$ from Table \ref{table:products of fermat primes}.
We remove those $(P, k_1)$ such that $2^{k_1}P^2+1$ is not prime.
Note that $s_1$ is even if $P\neq 3$ by Lemma \ref{lem:mod 3}.
Thus, we remove all $(P, 1)$ from Table \ref{table:products of fermat primes}.

We list the prime factorizations of $2^{k_1}P^2+1$ for $(P, k_1)$ with $k_1\geq 2$ in Table \ref{table:Case B many Candidates for P}.

\begin{table}[ht]
\caption{Prime factorization of $2^{k_1}P^2+1$}
\centering
\begin{tabular}{c |c c ||c c c}
\hline\hline
$(P, k_1)$ &   $2^{k_1}P^2+1$ & Prime?& $(P, k_1)$ &$2^{k_1}P^2+1$ & Prime?\\ [0.5ex] 
\hline
$(3, 2)$ & $37$ &prime  & $(3, 4)$ & $5\cdot 29$\\
$(7, 2)$ & $197$ & prime  &$(7,4)$& $5\cdot 157$\\
$(107, 2)$ & $41\cdot 1117$ & &$(13, 4)$ & $5\cdot 541$ \\
$(47,2)$ & $8837$ &prime  & $(2579, 4)$ & $1373 \cdot 77509$\\
$(83,2)$ & $17\cdot 1621$ && $(29, 4)$ & $13457$ & prime\\
$(43, 2)$ & $13 \cdot 569$ & & $(43, 4)$ & $5 \cdot 61 \cdot 97$\\
$(127, 2)$ & $149 \cdot 433$ && $(113, 4)$ & $5\cdot 29 \cdot 1409$\\
$(131, 2)$ & $5 \cdot 13729$ && $(127, 4)$ & $5\cdot 51613$\\
$(10631, 2)$ & $5 \cdot 197 \cdot 458957$ && $(3,8)$ & $5\cdot 461$\\
$(467, 2)$ & $60029 \cdot 135433$ && $(7,8)$& $5\cdot 13\cdot 193$\\
$(11, 2)$ & $5\cdot 97$ && $(241, 8)$ & $13 \cdot 1143749$\\
$(31, 2)$ & $5\cdot 769$ &&\\
$(151, 2)$ & $5\cdot 17 \cdot 29 \cdot 37$&&\\
$(331, 2)$ & $5\cdot 87649$&&\\[1ex]
\hline
\end{tabular}
\label{table:Case B many Candidates for P}
\end{table}

The only possible pairs are
\[(P, k_1)=(3, 2), (7, 2), (47, 2), (29, 4).\]
We list all the possible pairs $(P, k_1)$ together with combinations of Type 1 primes in Table \ref{table:Case B reduced candidates for P}.

\begin{table}[ht]
\caption{The possible pairs $(P, k_1)$ for Case B}
\centering
\begin{tabular}{|c| c|  }
\hline\hline
Combination of Type 1 primes & $(P, k_1)$  \\ [0.5ex] 
\hline
$5\cdot 17$ & $(3,2), (7,2)$\\
$5\cdot 257$ & $(3,2)$\\
$5\cdot 65537$ & $(3,2)$, $(7,2)$, $(47,2)$\\
$5\cdot 17 \cdot 257 \cdot 65537$ & $(3,2)$, $(7,2)$\\
$17\cdot 257 \cdot 65537$ & $(29, 4)$\\[1ex]
\hline
\end{tabular}

\label{table:Case B reduced candidates for P}
\end{table}

\section{The impossible case: $P=29$}
Let us consider the case $P=29$.
We see from Table \ref{table:Case B reduced candidates for P} that $17\cdot 257 \cdot 65537$ is the only pair for $P=29$.
Note that $2^{16} \mid L$.
We have $k_1=s_1=4$, and hence $r_1=2^4\cdot 29^2+1=13457$.
Let
\[m=17\cdot 257 \cdot 65537 \cdot 13457 \prod_{i \geq 1} u_i,\]
where $u_i$ is a prime of Type 2 or 3, or the product $\prod_{i \geq 1} u_i$ could be empty.
We have
\[m=(2^5\cdot 3\cdot 2383+1)(2^8+1)(2^{16}+1)\prod_{i \geq 1} u_i.\]

Since $2^{16}\mid L$, the product cannot be empty by the minimality argument.
It follows from $P=29\equiv 2 \pmod{3}$ that $l_i$ is odd and $s_i$ is even.
Since the parities of $l_i$ and $s_i$ are distinct, there is no $n$-pair for any $n$.
Since $2^{16} \mid L$, by the minimality argument, there is a $5$-prime (and it must be a Type 2 prime).
Thus we have $q_1=2^5\cdot 29+1=929$ with $l_1=5$.
Then we have
\begin{align*}
m&=17\cdot 257 \cdot 65537 \cdot 13457 \cdot 929\prod_{i \geq 2} u_i\\
&=(2^6\cdot 5^2\cdot 19 \cdot 6991)(2^8+1)(2^{16}+1)\prod_{i \geq 2} u_i
\end{align*}

Again since $2^{16}\mid L$, the product is not empty.
Since $6$ is even, the only possible $6$-prime is of Type 3. However, the number $2^6\cdot 29^2+1=5^2\cdot 2153$ is composite.
Thus we have:
\begin{theorem}
	If $P=29$, then there is no Carmichael number for Case B.
\end{theorem}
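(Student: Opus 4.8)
The plan is to run the procedure of Section~\ref{subsec:procedure} from the data that Case~B and $P=29$ force. By Table~\ref{table:Case B reduced candidates for P}, the only admissible combination of Type~1 primes is $R=17\cdot 257\cdot 65537$, so $k_1=4$; since we are in Case~B we also have $s_1=k_1=4$, whence the smallest Type~3 prime is $r_1=2^4\cdot 29^2+1=13457$. The key structural input is $65537-1=2^{16}$, so $2^{16}\mid L$; this is the large power of $2$ in $L$ that powers every use of the minimality argument below.

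First I would record the parity constraints. Since $29\equiv 2\pmod 3$, Lemma~\ref{lem:mod 3} gives that every $l_i$ is odd and every $s_i$ is even. Because $l_1<l_2<\cdots$ and $s_1<s_2<\cdots$ strictly, two distinct Type~2 primes (resp.\ two distinct Type~3 primes) never share a $2$-power, and a Type~2 prime cannot share a $2$-power with a Type~3 prime, the two parities being opposite. Hence no $n$-pair can ever occur, and in every application of Theorem~\ref{thm:minimality} only the $n$-prime alternative is available.

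Then I would iterate Steps~2 and~3. Writing $m=17\cdot 257\cdot 65537\cdot 13457\prod_{i\ge 1}u_i=(2^5\cdot 3\cdot 2383+1)(2^8+1)(2^{16}+1)\prod_{i\ge 1}u_i$, the product must be nonempty (otherwise $5$ is the unique smallest $2$-power while $2^6\mid 2^{16}\mid L$, contradicting Theorem~\ref{thm:power of 2}), so Theorem~\ref{thm:minimality}(1) with $b=5<16$ forces a $5$-prime; being odd, it is the Type~2 prime $q_1=2^5\cdot 29+1=929$, which is prime. Absorbing it gives $m=(2^6\cdot 5^2\cdot 19\cdot 6991+1)(2^8+1)(2^{16}+1)\prod_{i\ge 2}u_i$, and the same reasoning with $b=6<16$ forces a $6$-prime. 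Being even, it would have to be the Type~3 number $2^6\cdot 29^2+1=5^2\cdot 2153$, which is composite; no such prime exists, so the needed factor cannot be supplied and there is no Carmichael number with $P=29$ in Case~B.

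The only genuine computation is the two $2$-power reductions (the $2$-power of $17\cdot 13457$ and of $17\cdot 13457\cdot 929$) together with the primality status of $13457$, $929$, and $2^6\cdot 29^2+1$; none of this is hard. The point needing care is to invoke the ``no $n$-pair'' observation each time the minimality argument is applied, so that the $n$-prime branch is genuinely the only option, and to certify at each stage that the product of the not-yet-placed primes is nonempty — which always follows from $2^{16}\mid L$ and the fact that the current smallest exposed $2$-power is $<16$.
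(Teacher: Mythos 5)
Your proposal is correct and follows essentially the same route as the paper: same starting factorization $17\cdot 257\cdot 65537\cdot 13457$, the same parity observation ($l_i$ odd, $s_i$ even, so no $n$-pairs), the forced $5$-prime $929$, and the final contradiction from $2^6\cdot 29^2+1=5^2\cdot 2153$ being composite. The only difference is that you spell out the nonemptiness and ``no pair'' justifications a bit more explicitly than the paper does.
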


\section{The impossible case: $P=47$}
We now consider the case $P=47$.
From Table \ref{table:Case B reduced candidates for P}, the Type 1 combination is $5\cdot 65537$.
So $k_1=s_1=2$, and $r_1=2^2\cdot 47^2+1=8837$.
Let
\[m=5\cdot 65537 \cdot 8837 \prod_{i \geq 1} u_i=(2^3\cdot 3\cdot 7 \cdot 263+1)(2^{16}+1)\prod_{i \geq 1} u_i,\]
where $u_i$ is a Type 2 or 3 prime, or the product $\prod_{i \geq 1} u_i$ could be empty.

Since $2^{16}\mid L$, the product cannot be empty by the minimality argument.
Since $P=47\equiv 2 \pmod{3}$, we know that $l_i$ is odd and $s_i$ is even.
Hence there is no $n$-pair.
The only possible $3$-prime is of Type 2 but $2^3\cdot 47+1=13\cdot 29$ is composite.
This proves:
\begin{theorem}
	If $P=47$, then there is no Carmichael number for Case B.
\end{theorem}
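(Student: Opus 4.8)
The plan is to run the procedure of Section~\ref{subsec:procedure}, which for $P=47$ in Case~B terminates after a single pass. First I would pin down what the hypotheses force: by Table~\ref{table:Case B reduced candidates for P} the only admissible combination of Type~1 primes when $P=47$ is $5\cdot 65537$, so $2^{16}\mid L$; and since we are in Case~B with $k_1=s_1=2$, the smallest Type~3 prime is $r_1=2^2\cdot 47^2+1=8837$, which is prime by Table~\ref{table:Case B many Candidates for P}. Thus any Carmichael number here has the form
\[
m = 5\cdot 65537\cdot 8837\prod_{i\geq 1}u_i = (2^3\cdot 3\cdot 7\cdot 263+1)(2^{16}+1)\prod_{i\geq 1}u_i,
\]
the $u_i$ being Type~2 or Type~3 primes; note that the product $5\cdot 8837$ has $2$-power $3$, so $3$ is the least $2$-power among the displayed known factors.

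Next I would bring in the parity restrictions of Lemma~\ref{lem:mod 3}: because $47\equiv 2\pmod 3$ every exponent $l_i$ is odd, and because $47\neq 3$ every exponent $s_i$ is even. Hence a Type~2 prime and a Type~3 prime can never have equal $2$-power, so none of the ``$n$-pairs'' appearing in the minimality argument (Theorem~\ref{thm:minimality}) can be formed from the $u_i$. Since $2^{16}\mid L$, the product $\prod u_i$ is nonempty (otherwise $3$ would be the unique smallest $2$-power, contradicting Theorem~\ref{thm:power of 2}), so Theorem~\ref{thm:minimality} applies with $b=3<16=a$; having excluded pairs, it forces some $u_i$ to be a $3$-prime.

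Finally I would check the available $3$-primes. A $3$-prime of Type~3 would have even $2$-power, so it cannot exist; hence the $3$-prime must be of Type~2, i.e. $2^3\cdot 47+1=377=13\cdot 29$, which is composite. This contradiction shows no such $m$ exists. The argument is entirely mechanical once Lemma~\ref{lem:mod 3}, Theorem~\ref{thm:power of 2}, and Theorem~\ref{thm:minimality} are available; the only points needing care are the correct $2$-power of $5\cdot 8837$ (which is $3$, because the two factors together contribute an extra factor of $2$) and the single compositeness check $377=13\cdot 29$, so I do not anticipate any genuine obstacle.
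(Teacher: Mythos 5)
Your argument is correct and is essentially the paper's own proof: same Type 1 combination $5\cdot 65537$ forcing $2^{16}\mid L$, same factor $r_1=8837$ and decomposition $(2^3\cdot 3\cdot 7\cdot 263+1)(2^{16}+1)\prod u_i$, same parity argument from Lemma \ref{lem:mod 3} ruling out pairs, and the same final compositeness check $2^3\cdot 47+1=13\cdot 29$. No discrepancies to note.
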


\section{The impossible case: $P=7$}
We next deal with the case $P=7$ in Case B.
The combinations of Type 1 primes for $P=7$ are 
\[5\cdot 17, \qquad 5\cdot 65537, \qquad 5\cdot 17\cdot 257 \cdot 65537.\]
We have $k_1=s_1=2$, and thus $r_1=2^2\cdot 7^2+1=197$.
Note that $p_2=2^{k_2}+1$ with $k_2\geq 4$ in all cases.
We have $2^{4}\mid 2^{k_2}\mid L$.

Let
\[m=5\cdot 197 \cdot p_2 \prod_{i\geq 1}v_i=(2^3\cdot 3 \cdot 41+1)(2^{k_2}+1)\prod_{i\geq 1}v_i,\]
where $v_i$ is a prime of Type 1, 2, or 3, or the product could be empty.

Since $2^4\mid L$, there must be an $n$-pair with $n<3$ or $3$-prime otherwise $3$ is the unique $2$-power. 
Since $P=7\equiv 1 \pmod{3}$, we know $l_i, s_i$ are even.
Thus, there is no $3$-prime.
Since we already used 2-primes $p_1$, $r_1$ with $k_1=s_1=2$, there is no $2$-pair.
This proves:
\begin{theorem}
	If $P=7$, then there is no Carmichael number for Case B.
\end{theorem}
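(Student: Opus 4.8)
The plan is to treat all three Type~1 combinations for $P=7$ uniformly, using that in each of them $k_1=s_1=2$ while the second Fermat prime factor $p_2=2^{k_2}+1$ has $k_2\ge 4$. First I would record $p_1=5=2^2+1$ and $r_1=2^2\cdot 7^2+1=197$, and compute $5\cdot 197-1=984=2^3\cdot 3\cdot 41$, so that the product $5\cdot 197$ is a number of $2$-power exactly $3$. Thus $m=(2^3\cdot 3\cdot 41+1)(2^{k_2}+1)\prod_{i\ge 1}v_i$ with $k_2\ge 4$, hence $2^4\mid 2^{k_2}\mid L$; in particular the list $\{v_i\}$ is nonempty, since otherwise $3$ would be the unique smallest $2$-power, contradicting Theorem~\ref{thm:power of 2}.

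Next I would apply the minimality argument (part (1) of Theorem~\ref{thm:minimality}) with $b=3<4$: either there is a $3$-prime among the $v_i$, or there is a $\beta_1$-pair $(v_1,v_2)$ with $\beta_1<3$. Both options are killed by parity considerations. Since $7\equiv 1\pmod 3$, Lemma~\ref{lem:mod 3} tells us that every Type~2 and Type~3 prime factor has even $2$-power, while every Type~1 (Fermat) prime has $2$-power a power of $2$; hence no prime factor of $m$ can have $2$-power $3$, and there is no $3$-prime.

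To rule out a $\beta_1$-pair with $\beta_1<3$, note that only $\beta_1\in\{1,2\}$ is possible. For $\beta_1=1$ the only admissible prime is $3=2^1+1$, and squarefreeness of $m$ forbids a pair of distinct such primes. For $\beta_1=2$ the admissible primes are $5$, $29=2^2\cdot 7+1$, and $197=2^2\cdot 7^2+1$; but $29$ is excluded because Case~B forces $l_1>k_1=2$ with $l_1$ even, hence $l_1\ge 4$, while $5=p_1$ and $197=r_1$ have already been used. So there is no $2$-pair either, which contradicts the minimality argument and shows no Carmichael number exists for $P=7$ in Case~B.

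This is a short bookkeeping argument resting on Theorems~\ref{thm:power of 2} and \ref{thm:minimality} together with Lemma~\ref{lem:mod 3}, and I do not anticipate a genuine obstacle. The only delicate point is the exclusion of $29$ as a possible prime factor, which uses the Case~B hypothesis $k_1=s_1<l_1$ to force $l_1\ge 4$.
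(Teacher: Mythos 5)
Your proposal is correct and follows essentially the same route as the paper: group $5\cdot 197=2^3\cdot 3\cdot 41+1$, use $k_2\ge 4$ to get $2^4\mid L$, and then kill the required $3$-prime by the parity of $l_i,s_i$ (Lemma \ref{lem:mod 3}) and any smaller pair by unavailability of primes with $2$-power $1$ or $2$. Your treatment is only slightly more explicit than the paper's (spelling out the $1$-pair case and excluding $29$ via the Case B hypothesis, where the paper just notes $5$ and $197$ are already used), but the argument is the same.
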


\section{Case: $P=3$ for Case B}
The last possible $P$ for Case B is $P=3$.
It follows from Table \ref{table:Case B reduced candidates for P} that the combinations of Type 1 primes for $P=3$ are
\[5\cdot 17, \qquad 5\cdot 257, \qquad 5\cdot 65537, \qquad 5 \cdot 17 \cdot 257 \cdot 65537.\]
In all cases, we have $k_1=s_1=2$, $k_2\geq 4$, and thus $r_1=2^2\cdot 3^2+1=37$ and $2^4\mid L$.

\subsection{Case 1: $17 \mid m$}
Let us fist consider the case when $17 \mid m$.
Let 
\[m=5\cdot 17 \cdot 37 \prod_{i \geq 1} v_i=(2^3\cdot 23+1)(2^4+1)\prod_{i \geq 1} v_i,\]
where $v_i$ is a prime of Type 1, 2, or 3. 
By the minimality argument, the product cannot be empty and there is an $n$-pair $n<4$ or a $3$-prime.
From Table \ref{table:P=3}, we see there is no such a pair and the only $3$-prime is $73$.
Then we have
\[m=5\cdot 17 \cdot 37 \cdot 73\prod_{i \geq 2} v_i=(2^4\cdot 3\cdot 4783+1)\prod_{i \geq 2} v_i.\]

Since there is no $4$-primes and possible $n$-pairs, the product must be empty. But if the product is empty, then $m$ is not a Carmichael number as $3^2$ does not divide $m-1$.

\subsection{Case 2: The Type 1 combination is $5\cdot 257$}
We next consider the case when the Type 1 combination is $5\cdot 257$.
Then
\[m=5\cdot 257 \cdot 37\prod_{i \geq 1} u_i=(2^3\cdot 3 \cdot 7 \cdot 283+1)\prod_{i \geq 1} u_i,\]
where $u_i$ is a prime of Type 2, or 3.
Note that $2^{8} \mid L$.
By the minimality argument, we have the $3$-prime $73$.
So
\[m=5\cdot 257 \cdot 37 \cdot 73\prod_{i \geq 2} u_i=(2^6\cdot 3 \cdot 18077 +1)\prod_{i \geq 2
} u_i,\]
If the product is empty, then $m$ is not a Carmichael number as $3^2$ does not divide $m-1$.
If the product is nonempty, then since there is no $n$-pair $n<6$, there must be a $6$-prime.
There are two $6$-primes from Table \ref{table:P=3}: $193$ and $577$.

\subsubsection{Subcase 1: $q_1=193$}
Suppose that $q_1=193$.
Then we have
\begin{align*}
m=5\cdot 257 \cdot 37 \cdot 73 \cdot 193\prod_{i \geq 3} u_i=(2^7\cdot 3^3 \cdot 13 \cdot 44729+1)\prod_{i \geq 3} u_i.
\end{align*}
If the product is empty, then $m$ is not a Carmichael number as $2^8\mid L$ but $2^8$ does not divide $m-1$.
By the minimality argument, there is a $7$-prime as no possible $n$-pairs.
The unique $7$-prime is $1153$. So we have
\begin{align*}
m&=5\cdot 257 \cdot 37 \cdot 73 \cdot 193 \cdot 1153\prod_{i \geq 4} u_i\\
&=(2^8\cdot 3^3 \cdot 11 \cdot 10158227+1)\prod_{i \geq 4} u_i.
\end{align*}

If the product is empty, then
\[ \boxed{m=5 \cdot 37 \cdot 73 \cdot 193 \cdot 257\cdot 1153}\]
is a Carmichael number since $L=2^8\cdot 3^2$ divides $m-1=2^8\cdot 3^3 \cdot 11 \cdot 10158227$.

If the product is nonempty, then there must be a $8$-prime, which is $769$. Then
\begin{align*}
m&=5\cdot 257 \cdot 37 \cdot 73 \cdot 193 \cdot 1153 \cdot 769 \prod_{i \geq 5} u_i=(2^9\cdot 3 \cdot 19 \cdot 20351473151+1)\prod_{i \geq 5} u_i.
\end{align*}
As there is no $9$-primes or possible $n$-pairs, the product must be empty. Then $m$ is not a Carmichael number as $3^2\nmid m-1$.

\subsubsection{Subcase 2: $r_3=577$}
Suppose now that $r_3=577$. Then
\begin{align*}
m=5\cdot 257 \cdot 37 \cdot 73 \cdot 577\prod_{i \geq 3} u_i=(2^{11}\cdot 3 \cdot 325951 +1)\prod_{i \geq 3} u_i.
\end{align*}
If the product is empty, then $m$ is not a Carmichael number as $3^2\nmid m-1$.
If the product is nonempty, then by the minimality argument, there is an $n$-pair with $n<8$ or an $n$-prime with $n=8, 9, 10, 11$.
There is no possible pairs. We have the $8$-prime $769$ and the $11$-prime $18433$.

\paragraph{Subsubcase 1: $8$-prime $769$} Suppose $769$ is a divisor of $m$. Then
\begin{align*}
m=5\cdot 257 \cdot 37 \cdot 73 \cdot 577 \cdot 769\prod_{i \geq 4} u_i=(2^{8}\cdot 3^4 \cdot 283 \cdot 262433+1)\prod_{i \geq 4} u_i.
\end{align*}
If the product is empty, then
\[\boxed{m=5 \cdot 37 \cdot 73\cdot 257 \cdot 577 \cdot 769}\]
is a Carmichael number since $L=2^8\cdot 3^2$ divides $m-1=2^{8}\cdot 3^4 \cdot 283 \cdot 262433$.

\paragraph{Subsubcase 2: $11$-prime $18433$}
Next, suppose that $18433$ is a divisor of $m$.
Then
\begin{align*}
m=5\cdot 257 \cdot 37 \cdot 73 \cdot 577 \cdot 18433\prod_{i \geq 4} u_i=(2^{12}\cdot 3 \cdot3004127393 \cdot +1)\prod_{i \geq 4} u_i.
\end{align*}
If the product is empty, then $m$ is not a Carmichael number as $3^2\nmid m-1$.
If the product is nonempty, then the minimality argument yields that there is a $n$-prime with $n=8, 9, 10, 11, 12$.
From Table \ref{table:P=3}, we have the $8$-prime $769$ and the $12$-prime $12289$.

\subparagraph{Subsubsubcase 2-1: $8$-prime $769$}
Suppose that $769\mid m$. Then
\begin{align*}
m&=5\cdot 257 \cdot 37 \cdot 73 \cdot 577 \cdot 18433 \cdot 769\prod_{i \geq 5} u_i\\
&=(2^{8}\cdot 3^2 \cdot1667 \cdot 7391078473 +1)\prod_{i \geq 5} u_i.
\end{align*}
If the product is empty, then $m$ is not a Carmichael number as
$2^{11}\mid L$ but $2^{11} \nmid m-1$.
But since $2^{11}\mid L$ and there is no $8$-prime or possible pairs, the product must be empty.

\subparagraph{Subsubsubcase 2-2}: $12$-prime $12289$.
Next, we consider the case $12289\mid m$.
Then we have
\begin{align*}
m&=5\cdot 257 \cdot 37 \cdot 73 \cdot 577 \cdot 18433 \cdot 12289\prod_{i \geq 5} u_i\\
&=(2^{13}\cdot 3^2 \cdot 2357 \cdot 2610502159 +1)\prod_{i \geq 5} u_i.
\end{align*}
If the product is empty, then 
\[\boxed{m=5\cdot 257 \cdot 37 \cdot 73 \cdot 577  \cdot 12289 \cdot 18433}\]
is a Carmichael number since $L=2^{12}\cdot 3^2$ divides \[m-1=2^{13}\cdot 3^2 \cdot 2357 \cdot 2610502159.\]
Suppose the product is nonempty. Then since $2^{12}\mid L$, by the minimality argument, there must be an $n$-pair with $n<11$ or $12$-prime or $13$-prime.
Table \ref{table:P=3} shows that this is not the case.

\subsection{Case 3: The Type 1 combination is $5\cdot 65537$}
The last case for $P=3$ in Case B is when the Type 1 combination is $5\cdot 65537$.
Then
\[m=5\cdot 65537 \cdot 37\prod_{i \geq 1} u_i=(2^3\cdot 23 +1)(2^{16}+1)\prod_{i \geq 1} u_i,\]
where $u_i$ is a prime of Type 2 or 3.
Note that $2^{16}\mid L$.
By the minimality argument, the product cannot be empty, and there must be a $3$-prime.
The only $3$-prime is $73$. Then
\[m=5\cdot 65537 \cdot 37 \cdot 73\prod_{i \geq 2} u_i=(2^6\cdot 211 +1)(2^{16}+1)\prod_{i \geq 2} u_i.\]
Again, the product cannot be empty, and there must be a $6$-prime.
There are two $6$-primes: $193$ and $577$.

\subsubsection{Subcase 1: $6$-prime is $193$}
Suppose that $193\mid m$.
Then we have
\[m=5\cdot 65537 \cdot 37 \cdot 73 \cdot 193\prod_{i \geq 3} u_i=(2^7\cdot 7 \cdot 2909 +1)(2^{16}+1)\prod_{i \geq 3} u_i.\]
Furthermore, the product cannot be empty, and there must be a $7$-prime.
The only $7$-prime is $1153$ and we have
\[m=5\cdot 65537 \cdot 37 \cdot 73 \cdot 193 \cdot 1153\prod_{i \geq 3} u_i=(2^9\cdot 641 \cdot 9157 +1)(2^{16}+1)\prod_{i \geq 3} u_i.\]
And again, by the minimality argument the product is not empty, but there is no $9$-prime or admissible pairs.

\subsubsection{Subcase 1: $6$-prime is $577$}
Suppose next that $577\mid m$.
Then we have
\[m=5\cdot 65537 \cdot 37 \cdot 73 \cdot 577\prod_{i \geq 3} u_i=(2^8\cdot 61 \cdot 499 +1)(2^{16}+1)\prod_{i \geq 3} u_i.\]
By the minimality argument, the product is not empty and the $8$-prime $769$ divides $m$.
Then
\[m=5\cdot 65537 \cdot 37 \cdot 73 \cdot 577 \cdot  769\prod_{i \geq 3} u_i=(2^9\cdot 7^2 \cdot 238853 +1)(2^{16}+1)\prod_{i \geq 3} u_i.\]
The product cannot be empty otherwise $2^9$ is the unique smallest power of $2$. However, there is no $9$-prime or admissible pairs so the product must be empty. 
Hence there is no Carmichael number in this case.

In summary, we obtain:
\begin{theorem}
	Let $m$ be a Carmichael number with $L=2^{\alpha}\cdot 3^2$. In Case B ($k_1=s_1 <l_1)$, the following three are all Carmichael numbers of this type:
	\begin{align*}
m&=5 \cdot 37 \cdot 73 \cdot 193 \cdot 257\cdot 1153\\
m&=5 \cdot 37 \cdot 73\cdot 257 \cdot 577 \cdot 769\\
m&=5 \cdot 37 \cdot 73 \cdot 257 \cdot 577  \cdot 12289 \cdot 18433
\end{align*}
	
\end{theorem}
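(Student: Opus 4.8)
The plan is to run the procedure of Section~\ref{subsec:procedure} with $P=3$ under the Case~B hypothesis $k_1=s_1<l_1$, case-splitting on the combination of Type~1 primes of $m$. By Table~\ref{table:Case B reduced candidates for P} that combination is one of $5\cdot 17$, $5\cdot 257$, $5\cdot 65537$, or $5\cdot 17\cdot 257\cdot 65537$; in every case $k_1=s_1=2$ forces $r_1=2^2\cdot 3^2+1=37$, the next Fermat prime factor has $2$-power $k_2\ge 4$, and $2^4\mid L$. Since $P=3$ makes the parity constraints of Lemma~\ref{lem:mod 3} vacuous, the only instruments are the minimality argument (Theorem~\ref{thm:minimality}), Theorem~\ref{thm:power of 2}, and the primality data of Table~\ref{table:P=3}.

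First I would dispose of the two combinations containing $17$ simultaneously. Writing $m=5\cdot 17\cdot 37\prod v_i=(2^3\cdot 23+1)(2^4+1)\prod v_i$, the minimality argument forces the $3$-prime $73$; the resulting product $5\cdot 17\cdot 37\cdot 73=2^4\cdot 3\cdot 4783+1$ has $2$-power $4$, and since $17$ is already used, Table~\ref{table:P=3} offers no remaining $4$-prime and no admissible $n$-pair with $n<4$, so the product must be empty, whence $3^2\nmid m-1$, a contradiction.

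For the combination $5\cdot 257$ I would carry out the branching search in full. After $r_1=37$ and the forced $3$-prime $73$, the bound $2^8\mid L$ forces a $6$-prime, either $193$ or $577$. The branch $q_1=193$ then forces the $7$-prime $1153$ and either terminates at the Carmichael number $5\cdot 37\cdot 73\cdot 193\cdot 257\cdot 1153$ or, through the $8$-prime $769$, reaches $2$-power $9$ where no further prime is available and $3^2\nmid m-1$. The branch $r_3=577$ reaches a product of $2$-power $11$, which if nonempty requires either the $8$-prime $769$, giving $5\cdot 37\cdot 73\cdot 257\cdot 577\cdot 769$, or the $11$-prime $18433$, after which the $12$-prime $12289$ yields $5\cdot 37\cdot 73\cdot 257\cdot 577\cdot 12289\cdot 18433$, every remaining sub-branch dying because some required power of $2$ or $3^2$ fails to divide $m-1$. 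For $5\cdot 65537$ the bound $2^{16}\mid L$ repeatedly forces new prime factors ($73$, then a $6$-prime, then one more in either branch) until the running product attains $2$-power $9$; since Table~\ref{table:P=3} then provides no $9$-prime and no still-available admissible pair, no Carmichael number survives. Collecting the three survivors gives the statement.

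The work here is not a single hard step but the discipline of the search: one must verify that each invocation of Theorem~\ref{thm:minimality} is exhausted (no unlisted $n$-prime or $n$-pair below the relevant bound), that each terminal product is genuinely forced to be empty — because appending any still-allowed prime would create a unique smallest $2$-power and contradict Theorem~\ref{thm:power of 2}, or would leave $3^2$ or some power of $2$ dividing $L$ but not $m-1$ — and, for the three survivors, that $L=2^\alpha\cdot 3^2$ really does divide $m-1$, which is the only place an explicit factorization is indispensable and is recorded in the boxed displays above.
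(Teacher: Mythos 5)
Your proposal is correct and follows essentially the same route as the paper: the same case split on the Type~1 combinations (handling the two containing $17$ together), the same forced chain $37$, $73$, then the $6$-prime branching via Theorem~\ref{thm:minimality} and Table~\ref{table:P=3}, terminating exactly at the three listed numbers. The only caveat is that the branch-by-branch numerical verifications (factorizations of the partial products and the divisibility checks $L\mid m-1$) are sketched rather than carried out, but the search tree you describe matches the paper's.
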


\section{Case C: $l_1=s_1<k_1$}
Let $m$ be a Carmichael number with $L=2^{\alpha}P^2$ as before.
In this section, we consider Case C: $l_1=s_1<k_1$.

Note that $1\leq l_1 <k_2$. From Table \ref{table:products of fermat primes}, the possible values for $k_1$ are $2, 4, 8$.
\section{Case 1: $k_1=2$}
Consider the case $k_1=2$. Then we must have $l_1=s_1=1$.
By Lemma \ref{lem:mod 3}, this yields that $P=3$ as $s_1$ is odd.
Then we have $q_1=2\cdot 3+1=7$ and $r_1=2\cdot 3^2+1=19$.
From Table \ref{table:products of fermat primes}, the Type 1 combinations for $P=3$ with $k_1=2$ are
\[5 \cdot 17, \qquad 5\cdot 257, \qquad 5\cdot 65537, \qquad 5\cdot 17 \cdot 257 \cdot  65537. \]
In all cases, we have $2^{4}\mid L$.
Let
\[m=5\cdot 7 \cdot 19 \prod_{i \geq 1}v_i=(2^3\cdot 83+1)\prod_{i \geq 1}v_i,\]
where $v_i$ is a prime of Type 1, 2, or 3, or it could be empty.
As $2^4\mid L$, by the minimality argument (Theorem \ref{thm:minimality}) there must be a $2$-pair of a $3$-prime in $\prod_{i \geq 1}v_i$.
We see from Table \ref{table:P=3} that $(13, 37)$ is the only $2$-pair and $73$ is the only $3$-prime.

\subsection{Subcase 1: the $2$-pair $(13, 37)$}
Suppose that $m$ has the $2$-pair $(13, 37)$.
Then
\[m=5\cdot 7 \cdot 19 \cdot 13 \cdot 37\prod_{i \geq 3}v_i=(2^3\cdot 39983+1)\prod_{i \geq 3}v_i,\]
By the minimality argument, $m$ has the $3$-prime $73$.
Then
\[m=5\cdot 7 \cdot 19 \cdot 13 \cdot 37 \cdot 73\prod_{i \geq 3}v_i=(2^7\cdot 182423+1)\prod_{i \geq 3}v_i,\]
Note that we have used all possible primes with $2$-power less than $4$ (See Table \ref{table:P=3}) and $17$ is the unique $4$-prime.
Thus $17 \nmid m$, otherwise $2^4$ is the unique smallest power of $2$.
It follows that $2^{8}\mid L$.
Hence by the minimality argument, there is either an $n$-pair with $n<7$ or a $7$-prime.
Thus, there are two cases: the $6$-pair $(193, 577)$ and the $7$-prime $1153$.

\subsubsection{Subsubcase 1: the $6$-pair $(193, 577)$}
Suppose that $m$ has the $6$-pair $(193, 577)$.
Then we have
\begin{align*}
m=5\cdot 7 \cdot 19 \cdot 13 \cdot 37 \cdot 73 \cdot 193 \cdot 577\prod_{i \geq 5}v_i=(2^7\cdot 2003 \cdot 10142191+1)\prod_{i \geq 5}v_i.
\end{align*}
As $2^8\mid L$, the minimality arguments yields that $m$ has the $7$-prime $1153$.
Then
\begin{align*}
m&=5\cdot 7 \cdot 19 \cdot 13 \cdot 37 \cdot 73 \cdot 193 \cdot 577 \cdot 1153\prod_{i \geq 6}v_i\\
&=(2^8\cdot 1092373 \cdot 10721143+1)\prod_{i \geq 6}v_i.
\end{align*}
If the product $\prod_{i \geq 6}v_i=1$, then $m$ is not a Carmichael number because $3^2\nmid m-1$.
Thus the product is nonempty. As there is no more admissible pairs, there must be a $8$-prime.
There are two $8$-primes: $257$ and $769$

\paragraph{Subsubsubcase 1: the $8$-prime: $257$}
Suppose that $257\mid m$.
Then we have
\begin{align*}
m&=5\cdot 7 \cdot 19 \cdot 13 \cdot 37 \cdot 73 \cdot 193 \cdot 577 \cdot 1153 \cdot 257 \prod_{i \geq 7}v_i\\
&=(2^{10}\cdot 3^2 \cdot 83607005432809+1)\prod_{i \geq 7}v_i.
\end{align*}
If the product is empty, then
\[\boxed{m=5\cdot 7  \cdot 13 \cdot 19 \cdot 37 \cdot 73 \cdot 193 \cdot 257 \cdot 577 \cdot 1153 }\]
is a Carmichael number since $L=2^8\cdot 3^2$ divides $m-1=2^{10}\cdot 3^2 \cdot 83607005432809$.

Note that $2^8\mid L$. If the product is nonempty, then there is an $n$-prime with $n=8, 9, 10, 11$ as there is no admissible pairs.
So $m$ has the $8$-prime $769$, and then
\begin{align*}
m&=5\cdot 7 \cdot 19 \cdot 13 \cdot 37 \cdot 73 \cdot 193 \cdot 577 \cdot 1153 \cdot 257 \cdot 769 \prod_{i \geq 7}v_i\\
&=(2^{8}\cdot 3 \cdot 11 \cdot 31 \cdot 2262537965202233+1)\prod_{i \geq 7}v_i.
\end{align*}
Since there is no more admissible pair or $8$-prime, the product is empty, but then $m$ is not a Carmichael number as $3^2\nmid m-1$.

\paragraph{Subsubsubcase 2: the $8$-prime: $769$}
Next, suppose that $769\mid m$.
Then
\begin{align*}
m&=5\cdot 7 \cdot 19 \cdot 13 \cdot 37 \cdot 73 \cdot 193 \cdot 577 \cdot 1153 \cdot 769 \prod_{i \geq 7}v_i\\
&=(2^{9}\cdot 4503066806229347+1)\prod_{i \geq 7}v_i.
\end{align*}
If the product is empty, then $m$ is not a Carmichael number as $3\nmid m-1$.
Then by the minimality argument, there is a $8$- or $9$-prime.
Since there is no $9$-prime, $m$ must have the $8$-prime $257$.
Then this case reduces to the previous case.

\subsubsection{Subsubcase 2: the $7$-prime $1153$}
Now suppose that $r_4=1153\mid m$.
Then
\begin{align*}
m=5\cdot 7 \cdot 19 \cdot 13 \cdot 37 \cdot 73 \cdot 1153
 \cdot 577\prod_{i \geq 5}v_i=(2^{12}\cdot 11 \cdot 597539 +1)\prod_{i \geq 5}v_i.
\end{align*}
Since $2^8\mid L$, by the minimality argument, there is an $n$-prime with $8\leq n \leq 12$.
Those are: $8$-primes $769$, $257$, the $11$-prime $18433$, the $12$-prime $12289$ from Table \ref{table:P=3}.

\paragraph{Subsubsubcase 1: the $8$-prime $769$}
When $769\mid m$, we have
\begin{align*}
m=5\cdot 7 \cdot 19 \cdot 13 \cdot 37 \cdot 73 \cdot 1153
 \cdot 577 \cdot 769 \prod_{i \geq 6}v_i=(2^{8}\cdot 89 \cdot 1483\cdot 612737 +1)\prod_{i \geq 6}v_i.
\end{align*}
If the product is empty, then $m$ is not a Carmichael number as $3\nmid m-1$.
Thus it has the $8$-prime $257$, and then
\begin{align*}
m&=5\cdot 7 \cdot 19 \cdot 13 \cdot 37 \cdot 73 \cdot 1153
 \cdot 577 \cdot 769 \cdot 257\prod_{i \geq 7}v_i\\
 &=(2^{10}\cdot 3^2 \cdot 61\cdot 9464682529 +1)\prod_{i \geq 6}v_i.
\end{align*}
The product must be empty since there is no $n$-prime for $n=8, 9, 10$.
Then
\[\boxed{m=5\cdot 7  \cdot 13 \cdot 19\cdot 37 \cdot 73 \cdot 257
 \cdot 577 \cdot 769 \cdot 1153}\]
 is a Carmichael number since $L=2^8 \cdot 3^2$ divides $m-1$.

\paragraph{Subsubsubcase 2: the $8$-prime $257$}
When $257\mid m$, then
\begin{align*}
m=5\cdot 7 \cdot 19 \cdot 13 \cdot 37 \cdot 73 \cdot 1153
 \cdot 577 \cdot 257 \prod_{i \geq 6}v_i=(2^{8}\cdot 3 \cdot 43 \cdot 209518481 +1)\prod_{i \geq 6}v_i.
\end{align*}
If the product is empty, then $m$ is not a Carmichael number as $3^2\nmid m-1$.
Thus $m$ has the $8$-prime $769$, and this case reduces to the previous case.

\paragraph{Subsubsubcase 3: the $11$-prime $18433$}
If $18433\mid m$, then
\begin{align*}
m=5\cdot 7 \cdot 19 \cdot 13 \cdot 37 \cdot 73 \cdot 1153
 \cdot 577 \cdot 257 \cdot 18433\prod_{i \geq 6}v_i=(2^{11} x +1)\prod_{i \geq 6}v_i,
\end{align*}
where $x$ is an odd number.
(From now on, to save space, I use $x$ for an odd number, especially when $m$ is not a Carmichael number.)
Note that $2^{11}\mid L$ but there is no more $11$-prime.
Thus the product must be empty.
If the product is empty, then $m$ is not a Carmichael number since $3^2\nmid m-1$.

\paragraph{Subsubsubcase 4: $12$-prime $12289$}
When $12289\mid m$, we have
\begin{align*}
m=5\cdot 7 \cdot 19 \cdot 13 \cdot 37 \cdot 73 \cdot 1153
 \cdot 577 \cdot 257 \cdot 12289\prod_{i \geq 6}v_i=(2^{14} x +1)\prod_{i \geq 6}v_i.
\end{align*}
Since $2^{12}\mid L$, there is an $n$-prime with $n=12, 13, 14$.
From Table \ref{table:P=3}, such a prime must be the  $14$-prime $147457$.
Then
\begin{align*}
m=5\cdot 7 \cdot 19 \cdot 13 \cdot 37 \cdot 73 \cdot 1153
 \cdot 577 \cdot 257 \cdot 12289 \cdot 147457 \prod_{i \geq 7}v_i=(2^{15} x +1)\prod_{i \geq 7}v_i
\end{align*}
and $2^{14} \mid L$.
Since there is no $14$-prime and $15$-prime, the product must be empty. But then $m$ is not a Carmichael number as $3\nmid m-1$.

\subsection{Subcase 2: the $3$-prime $73$}
Next, we consider the case $r_2=73$ with $s_2=3$.
Then
\[m=5\cdot 7 \cdot 19 \cdot 73 \prod_{i \geq 2}v_i=(2^5\cdot 37\cdot 41+1)\prod_{i \geq 2}v_i,\]
Since $2^4\mid L$, by the minimality argument, there is an $n$-prime with $n=4, 5$.
From Table \ref{table:P=3}, there are the $4$-prime $17$ and $5$-prime $97$.

\subsubsection{Subsubcase 1: the $4$-prime $17$}
When $17\mid m$, we have
\[m=5\cdot 7 \cdot 19 \cdot 73 \cdot 17 \prod_{i \geq 3}v_i=(2^4\cdot 3^2\cdot 11 \cdot 521+1)\prod_{i \geq 3}v_i,\]
Since there is no more $4$-prime, the product is empty.
Then
\[\boxed{m=5\cdot 7 \cdot 17 \cdot 19   \cdot 73}\]
is a Carmichael number since $L=2^4\cdot 3^2$ divides $m-1$.

\subsubsection{Subsubcase 2: the $5$-prime $97$}
When $97\mid m$, we have
\[m=5\cdot 7 \cdot 19 \cdot 73 \cdot 97 \prod_{i \geq 3}v_i=(2^9\cdot 17 \cdot 541 +1)\prod_{i \geq 3}v_i.\]
If the product is empty, then $m$ is not a Carmichael number as $3\nmid m-1$.
Since $2^{5}\mid L$, by the minimality argument, there is an $n$-prime with $n=5, 6, 7, 8$.
From Table \ref{table:P=3}, there are five possible such prime numbers: $6$-primes $193$, $577$, the $7$-prime $1153$, and $8$-primes $769$, $257$.
We consider these cases individually below.

\textbf{Convention}: We use $x$ for an odd number such that $2^ax+1$ is not a Carmichael number.

\paragraph{Subsubsubcase 1: $6$-primes $193$}\label{Case C, k=2, subsubsubcase 1 6-prime 193}
When $193\mid m$, then
\[m=5\cdot 7 \cdot 19 \cdot 73 \cdot 97 \cdot 193 \prod_{i \geq 4}v_i=(2^6 x +1)\prod_{i \geq 4}v_i.\]
Our convention is that $x$ is an odd number and $2^6x+1$ is not a Carmichael number.
By the minimality argument, there must be the $6$-prime $577$.
Then
\[m=5\cdot 7 \cdot 19 \cdot 73 \cdot 97 \cdot 193 \cdot 577 \prod_{i \geq 5}v_i=(2^8 x +1)\prod_{i \geq 5}v_i.\]
Since $2^6\mid L$, the possible values for $v_6$ are the $7$-prime $1153$ and $8$-primes $769$, $257$.

\subparagraph{When $1153\mid m$}, we have 
\[m=5\cdot 7 \cdot 19 \cdot 73 \cdot 97 \cdot 193 \cdot 577 \cdot 1152\prod_{i \geq 6}v_i=(2^7 x +1)\prod_{i \geq 6}v_i.\]
There is no more $7$-prime. No Carmichael number exists in this case.

\subparagraph{When $769\mid m$}, we have 
\[m=5\cdot 7 \cdot 19 \cdot 73 \cdot 97 \cdot 193 \cdot 577 \cdot 769\prod_{i \geq 6}v_i=(2^{12}x +1)\prod_{i \geq 6}v_i.\]
Note that $2^8\mid L$. There is an $n$-prime with $n=8, 9, 10, 11, 12$. 
From Table \ref{table:P=3}, these are the $8$-prime $257$, the $11$-prime $18433$, and the $12$-prime $12289$.
\begin{enumerate}
	\item When $257\mid m$, we have
	\[m=5\cdot 7 \cdot 19 \cdot 73 \cdot 97 \cdot 193 \cdot 577 \cdot 769 \cdot 257 \prod_{i \geq 7}v_i=(2^{8}x +1)\prod_{i \geq 7}v_i\]
	and there is no further $8$-prime. No Carmichael number exists in this case.

\item When $18433\mid m$, we have
	\[m=5\cdot 7 \cdot 19 \cdot 73 \cdot 97 \cdot 193 \cdot 577 \cdot 769 \cdot 18433 \prod_{i \geq 7}v_i=(2^{11} x +1)\prod_{i \geq 7}v_i.\]
	Note $2^{11} \mid L$ and there is no $11$-prime. No Carmichael number exists in this case.
\item When $12289\mid m$.
\item 	\[m=5\cdot 7 \cdot 19 \cdot 73 \cdot 97 \cdot 193 \cdot 577 \cdot 769 \cdot 12289 \prod_{i \geq 7}v_i=(2^{14} x +1)\prod_{i \geq 7}v_i.\]
Note that $2^{12}\mid L$. So there is an $n$-prime with $n=12, 13, 14$.
Table \ref{table:P=3} gives the $14$-prime $147457$.
Then combining this number yields
\[m=(2^{16}+1)\prod_{i \geq 8}v_i.\]
Since $2^{14}\mid L$, there is an $n$-prime with $n=14, 15, 16$.
Table \ref{table:P=3} gives the $16$-prime $65537$.
Then including this factor, we have
\[m=(2^{17}+1)\prod_{i \geq 9}v_i.\]
Since $2^{16}\mid L$, we must have the $17$-prime $1179649$.
This gives
\[m=(2^{18}+1)\prod_{i \geq 10}v_i.\]
Furthermore, $m$ is divisible by the $18$-prime $786433$.
Then $m$ is 
\begin{align*}
&5\cdot 7 \cdot 19 \cdot 73 \cdot 97 \cdot 193 \cdot 577 \cdot 769 \cdot 12289 \cdot 147457 \cdot 65537 \cdot 1179649 \cdot 786433\prod_{i \geq 11}v_i\\
&=(2^{23}\cdot 3^3 \cdot 13 \cdot 14783 \cdot 1020702401040725135124085171+1)\prod_{i \geq 11}v_i
\end{align*}
Since there is no more $n$-prime for $18\leq n \leq 23$, the product must be empty.
Then
\[\boxed{5\cdot 7 \cdot 19 \cdot 73 \cdot 97 \cdot 193 \cdot 577 \cdot 769 \cdot 12289 \cdot 147457 \cdot 65537 \cdot 1179649 \cdot 786433}\]
is a Carmichael number since $L=2^{18}\cdot 3^2$ divides $m-1$.
\end{enumerate}

\subparagraph{When $257\mid m$}, we have 
\[m=5\cdot 7 \cdot 19 \cdot 73 \cdot 97 \cdot 193 \cdot 577 \cdot  257 \prod_{i \geq 6}v_i=(2^{9}x +1)\prod_{i \geq 6}v_i.\]
Since $2^8\mid L$, there is an $n$-prime with $n=8, 9$.
As there is no $9$-prime in Table \ref{table:P=3}, we have the $8$-prime $769$.
Combining $769$, we have
\[m=(2^{8}x +1)\prod_{i \geq 7}v_i.\]
As there is no more $8$-prime, no Carmichael number exists in this case.

\paragraph{Subsubsubcase 2: the $6$-prime $577$}
When $577\mid m$, then
\[m=5\cdot 7 \cdot 19 \cdot 73 \cdot 97 \cdot 577 \prod_{i \geq 4}v_i=(2^6 x +1)\prod_{i \geq 4}v_i.\]
Since $2^6\mid L$, $m$ is divisible by the $6$-prime $193$.
Then this reduces to the case dealt in Section \ref{Case C, k=2, subsubsubcase 1 6-prime 193}.

\paragraph{Subsubsubcase 3: the $7$-prime $1153$}
When $1153\mid m$, we have 
\[m=(2^7x+1)\prod_{i \geq 4}v_i.\]
Since $2^7\mid L$ but there is no $7$-prime, no Carmichael number exists in this case.

\paragraph{Subsubsubcase 4: the $8$-prime $769$}
When $769\mid m$, we have 
\[m=(2^8x+1)\prod_{i \geq 4}v_i.\]
Since $2^8\mid L$, the $8$-prime $257$ divides $m$.
Then including $257$, we have
\[m=(2^9x+1)\prod_{i \geq 5}v_i.\]
As there is no more $8$-prime and $9$-prime, no Carmichael number exists in this case.

\paragraph{Subsubsubcase 5: the $8$-prime $257$}
When $257\mid m$, we have
\[m=5\cdot 7 \cdot 19 \cdot 73 \cdot 97 \cdot 257 \prod_{i \geq 4}v_i=(2^8\cdot 3^2 \cdot 23 \cdot 41 \cdot 557+1)\prod_{i \geq 4}v_i.\]
If the product is empty, then
\[\boxed{m=5\cdot 7 \cdot 19 \cdot 73 \cdot 97 \cdot 257}\]
is a Carmichael number since $L=2^8\cdot 3^2$ divides $m-1$.

Otherwise, since $2^8\mid L$, the $8$-prime $769$ divides $m$, and
\[m=(2^9x+1)\prod_{i \geq 5}v_i.\]
As there is no more $8$-prime and $9$-prime, no other Carmichael number exists in this case.

 \section{Case 2: $k_1=4$}
 We consider the case $k_1=4$ in Case C.
 We have three cases to consider $l_1=s_1=1, 2, 3$.
 
 \subsection{$l_1=s_1=1$}
 Suppose that $l_1=s_1=1$. 
 Since $s_1$ is odd, we must have $P=3$ by Lemma \ref{lem:mod 3}. 
 So $q_1=7$ and $r_1=19$.
 
 There are two combinations of Type 1 primes: $17\cdot 257$ and $17\cdot 65537$ from Table \ref{table:products of fermat primes}.
 In either case, we have $2^{4}\mid L$.

 Let 
 \[m=17\cdot (2^{k_2}+1) \cdot 7 \cdot 19 \prod_{i \geq 1} u_i=(2^2 \cdot 5 \cdot 113+1)(2^{k_2}+1)\prod_{i\geq 1}u_i,\]
 where $u_i$ is a prime of Type 2 or 3, or the product could be empty, and $k_2$ is either $4$ or $16$.

 By the minimality argument (Theorem \ref{thm:minimality}), there must be a $2$-prime: $q_1=13$ or $r_2=37$.
 
 \subsubsection{Subcase 1: the $2$-prime $13$}
 When $13\mid m$, we have
 \begin{align*}
m&=17\cdot (2^{k_2}+1) \cdot 7 \cdot 19 \cdot 13 \prod_{i \geq 2} u_i\\
 &=(2^4 \cdot 11 \cdot 167+1)(2^{k_2}+1)\prod_{i\geq 2}u_i\\
 &=(2^4x+1)\prod_{i\geq 2}u_i
\end{align*}
for both values of $k_2$.
Here we use the same convention of $x$ as before: $x$ is an odd number and $2^4x+1$ is not a Carmichael number.
Since $2^4\mid L$ and there is no more $4$-prime and $n$-pair with $n<4$, no Carmichael number exists in this case.

\subsubsection{Subcase 2: the $2$-prime $37$}
Suppose that $37\mid m$. Then
 \begin{align*}
m&=17\cdot (2^{k_2}+1) \cdot 7 \cdot 19 \cdot 37 \prod_{i \geq 2} u_i=(2^3 \cdot 10457+1)(2^{k_2}+1)\prod_{i\geq 2}u_i\\
\end{align*}
By the minimality argument, there is a $3$-prime: $73$.
So we have
\begin{align*}
m&=17\cdot (2^{k_2}+1) \cdot 7 \cdot 19 \cdot 37 \cdot 73 \prod_{i \geq 2} u_i=(2^4 x+1)\prod_{i\geq 2}u_i
\end{align*}
for both values of $k_2$.
Since there is no $4$-prime, no Carmichael number exists in this case.

\subsection{$l_1=s_1=2$}
We now consider the case $l_1=s_1=2$ in Case C.
As $l_1$ is even, this implies that either $P=3$ or $P\equiv 1 \pmod{3}$ by Lemma \ref{lem:mod 3}.

\subsubsection{Case: $P=3$}
Suppose that $P=3$. Then we have $q_1=13$ and $r_1=37$.
Then Type 1 prime combinations are either $17\cdot 257$ or $17\cdot 65537$ from Table \ref{table:products of fermat primes}.
In either case, we have $2^4\mid L$.
Let
\[m=17\cdot (2^{k_2}+1)\cdot 13\cdot 37 \prod_{i\geq 1}u_i,\]
where $u_i$ is a prime of Type 2 or 3, or the product could be empty. Here $k_2$ is either $4$ or $16$.
Then we compute
\[m=(2^4x+1) \prod_{i\geq 1}u_i\]
for both values of $k_2$.
Here $x$ is an odd number and $2^4x+1$ is not a Carmichael number.
Since $2^4\mid L$, the minimality argument (Theorem \ref{thm:minimality}) implies that the product has either $n$-pair with $n<4$ or $4$-prime.
Table \ref{table:P=3} shows that this is not the case.

\subsubsection{Case: $P\equiv 1 \pmod{3}$}
Let $P\equiv 1 \pmod{3}$.
From Table \ref{table:products of fermat primes}, we find primes $P\equiv 1 \pmod{3}$ with $k_1=4$ such that $2^2P+1$ and $2^2P^2+1$ are both prime.
See Table \ref{table:P=1 (3) possible P}.
\begin{table}[ht]
\caption{Possible $P\equiv 1 \pmod{3}$ with $l_1=s_1=2$, $k_1=4$}
\centering
\begin{tabular}{|c| c| c |c |}
\hline\hline
$P$ & $2^2\cdot P+1$ & $2^2\cdot P^2+1$ & Type 1 prime combination \\ [0.5ex] 
\hline
$7$ & $29$ prime & $197$ prime   & $17\cdot 257$ \\
$13$ & $53$ prime & $677$ prime  & $17 \cdot 257$ \\
$43$ & $173$ prime& $\times$ & \\
$127$ &  $509$ prime& $\times$ &   \\
$2579$ &  $\times$ & $\times$ & \\
[1ex]
\hline
\end{tabular}

$\times$=composite
\label{table:P=1 (3) possible P}
\end{table}

Thus, we consider $P=7$ and $P=13$.
\paragraph{Subcase 1: $P=7$}
When $P=7$, we have $q_1=29$ and $r_1=197$.
The only Type 1 combination for $P=7$ is $17\cdot 257$.

Let
\[m=17\cdot 257 \cdot 29 \cdot 197 \prod_{i \geq 1}u_i=(2^5\cdot 3^3\cdot 7 \cdot 4127+1)\prod_{i \geq 1}u_i,\]
where $u_i$ is a Type 2 or 3 prime, or the product could be empty.
Note that $2^8\mid L$.

By the minimality argument, there must be an $n$-pair with $n<5$ or a $5$-prime.
However this is impossible from Table \ref{table:P=7}.
(Recall that $l_i, s_i$ are even as $P\equiv 1 \pmod{3}$.) No Carmichael number exists in this case.

\paragraph{Subcase 1: $P=13$}
When $P=13$, we have $q_1=53,$ and $r_1=677$. The only Type 1 combination is $17 \cdot 257$.
Note that $2^8\mid L$.
Let
\[m=17\cdot 257 \cdot 53 \cdot 677 \prod_{i \geq 1}u_i=(2^3\cdot 293 \cdot 79813+1)\prod_{i \geq 1}u_i.\]
where $u_i$ is a Type 2 or 3 prime, or the product could be empty.
By the minimality argument, there must be $3$-prime but since $l_i, s_i$ are even as $13\equiv 1 \pmod{3}$, there is no $3$-prime. No Carmichael number exists in this case.

 \subsection{$l_1=s_1=3$}
 If $l_1=s_1=3$, then $P=3$ since $s_1$ is odd by Lemma \ref{lem:mod 3}.
 However, $2^3\cdot 3+1=5^2$ is not a prime. Hence No Carmichael number exists in this case.

\section{Case 3: $k_1=8$}
We consider the case $l_1=s_1 <k_1=8$.
There are seven cases according to the value of $l_1=1, 2, \dots, 7$. Note that $257 \cdot 65537$ is the unique Type 1 combination with $k_1=8$.

\subsubsection{Subcase 1: $l_1=s_1=1$}
When $l_1=s_1=1$, as $s_1$ is odd, we must have $P=3$.
Then $q_1=7$ and $r_1=19$.
Let 
\[m= 257 \cdot 65537 \cdot 7 \cdot 19 \prod_{i \geq 1}u_i=(2^2\cdot 3\cdot 11+1)\cdot 257 \cdot 65537 \cdot  \prod_{i \geq 1}u_i,\]
where $u_i$ is a Type 2 or 3 prime, or the product could be empty.
Note that $2^{16}\mid L$.

By the minimality argument (Theorem \ref{thm:minimality}) we must have a $2$-prime.
Then we have either $q_2=13$ or $r_2=37$. (See Table \ref{table:P=3}.)

\paragraph{Subsubcase 1: $q_2=13$}
When $13\mid m$, then we have
\[m=257 \cdot 65537 \cdot 7 \cdot 19 \cdot 13 \prod_{i \geq 2}u_i= (2^6\cdot 3^3+1)\cdot 257 \cdot 65537 \cdot  \prod_{i \geq 2}u_i.\]
Since $2^{16}\mid L$, there is an $n$-pair with $n<6$ or a $6$-prime.
We see from Table \ref{table:P=3} that there is no such a pair and we have two $6$-primes: $193$ and $577$.

\subparagraph{When $q_3=193$}
When $193\mid m$, we have
\[m=257 \cdot 65537 \cdot 7 \cdot 19 \cdot 13 \cdot 193 \prod_{i \geq 3}u_i= (2^8x+1) \prod_{i \geq 3}u_i.\]
As there is no admissible pair, we have the $8$-prime $769$. Combining $769$, we have
\[m=(2^{13}x+1)\prod_{i \geq 4}u_i.\]
As there is no admissible pairs and $13$-primes, no Carmichael number exists in this case.

\subparagraph{When $r_2=577$}
If $577\mid m$, then
\[m=257 \cdot 65537 \cdot 7 \cdot 19 \cdot 13 \cdot577\prod_{i \geq 3}u_i= (2^9x+1) \prod_{i \geq 3}u_i\]
There is no $n$-pair with $n<9$ and there is no $9$ prime.
Hence no Carmichael number exists in this case.

\paragraph{Subsubcase 2: $r_2=37$}
When $37\mid m$, then we have
\[m=257 \cdot 65537 \cdot 7 \cdot 19 \cdot 37 \prod_{i \geq 2}u_i= (2^3\cdot 3 \cdot 5 \cdot 41+1)\cdot 257 \cdot 65537 \cdot  \prod_{i \geq 2}u_i.\]
As $2^{16}\mid L$, $m$ is divisible by the $3$-prime $73$.
Combining this yields
\[m=(2^6x+1)\cdot 257 \cdot 65537 \cdot  \prod_{i \geq 3}u_i.\]
Hence we have a $6$-prime: $193$, $577$.

\subparagraph{When $q_2=193$}
When $193\mid m$, combining this we have
\[m=(2^8x+1)\cdot  \prod_{i \geq 4}u_i.\]
Then we have the $8$-prime $769$, and 
\[m=(2^{12}x+1)\cdot  \prod_{i \geq 5}u_i,\]
which implies again that the $12$-prime $12289$ divides $m$.
Hence
\[m=(2^{13}x+1)\cdot  \prod_{i \geq 6}u_i,\]
but there is no $13$-prime.
Thus no Carmichael number exists in this case.

\subparagraph{When $r_4=577$}
When $577\mid m$, we combine this and obtain
\[m=(2^7x+1)\cdot  \prod_{i \geq 4}u_i.\]
Hence we have the $7$-prime $1153$ and
\[m=(2^8x+1)\cdot  \prod_{i \geq 5}u_i,\]
which implies the existence of the $8$-prime $769$.
Then
\[m=(2^9x+1)\cdot  \prod_{i \geq 5}u_i,\]
but there is no $9$-prime.
Hence no Carmichael number exists in this case.

\subsubsection{Subcase 2: $l_1=s_1=2$}
Consider the case  $l_1=s_1=2$.
As $l_1$ is even, $P=3$ or $P\equiv 1 \pmod{3}$.

\paragraph{Subsubcase 1: When $P=3$}
When $P=3$, we have $q_1=13$ and $r_1=37$.
Note that $2^{16}\mid L$.
Then we have by the minimality argument
\begin{align*}
m&=257\cdot 65537 \cdot 13 \cdot 37 \prod_{i \geq 1}u_i\\
&=(2^5\cdot 3\cdot 5+1) 257\cdot 65537\prod_{i \geq 1}u_i && \text{then the $5$-prime $97$ divides $m$}\\
&=(2^6\cdot 3^6+1)57\cdot 65537\prod_{i \geq 2}u_i.
\end{align*}
This implies that $m$ is divisible by a $6$-prime: $193$, $577$.

\subparagraph{the $6$-prime $193$}
When $193\mid m$, we have
\begin{align*}
m&=(2^{11}x+1) \prod_{i \geq 3}u_i && \text{then the $11$-prime $18433$ divides $m$}\\
&=(2^{12}x+1)\prod_{i \geq 4}u_i&& \text{then the $12$-prime $12289$ divides $m$}\\
&=(2^{13}x+1)\prod_{i \geq 5}u_i.
\end{align*}
(There were no admissible pairs to consider.)
Since there is no $13$-prime, no Carmichael number exists in this case.

\subparagraph{the $6$-prime $577$}
When $577\mid m$, we have
\begin{align*}
m&=257\cdot 65537 \cdot 13 \cdot 37 \cdot 97\cdot 577 \prod_{i \geq 3}u_i\\
&=(2^{7}x+1) \cdot 257\cdot 65537 \prod_{i \geq 4}u_i&& \text{then the $7$-prime $1153$ divides $m$}\\
&=(2^{9}x+1)(2^{16}+1) \prod_{i \geq 4}u_i
\end{align*}
(There were no admissible pairs to consider.)
Since there is no $9$-prime, no Carmichael number exists in this case.

\paragraph{Subsubcase 1: When $P\equiv 1 \pmod{3}$}
Suppose that $P\equiv 1 \pmod{3}$.
Then the only possible $P$ are $7, 13, 241$.
However, as $2^2\cdot 241+1=5\cdot 193$ is not prime, there are two cases: $P=7$ and $P=13$.

\subparagraph{When $P=7$}
When $P=7$, we have $q_1=29$ and $r_1=197$.
Note that $2^{16}\mid L$.
Then the minimality arguments (see Table \ref{table:P=7}) gives 
\begin{align*}
m&=257\cdot 65537 \cdot 29\cdot 197 \prod_{i \geq 1}u_i\\
&=(2^{4}x+1) \cdot 257\cdot 65537 \prod_{i \geq 1}u_i&& \text{then the $4$-prime $113$ divides $m$}\\
&=(2^{6}x+1) \cdot 257\cdot 65537 \prod_{i \geq 2}u_i
\end{align*}
and $m$ is divisible by a $6$-prime: $449$, $3137$.

\begin{enumerate}
	\item When $449\mid m$, we have
	\[m=(2^{7}x+1) \cdot 257\cdot 65537 \prod_{i \geq 3}u_i\]
	but there is no $7$-prime.
	
	\item When $3137\mid m$, we have
	\[m=(2^{9}x+1) \cdot 257\cdot 65537 \prod_{i \geq 3}u_i\]
	but there is no $9$-prime.
\end{enumerate}
Hence no Carmichael number exists in this case.

\subparagraph{When $P=13$}
When $P=13$, then $q_1=53$ and $r_1=677$.
Then we have
\begin{align*}
m&=257\cdot 65537 \cdot 53 \cdot 677 \prod_{i \geq 1}u_i=(2^{3}x+1) \cdot 257\cdot 65537 \prod_{i \geq 1}u_i
\end{align*}
Since $l_2, s_2$ are even, there is no $3$-prime.
Hence no Carmichael number exists in this case.

\subsubsection{Subcase 3: $l_1=s_1=3$}
When $l_1=s_1=3$, we have $P=3$ by Lemma \ref{table:P=3}.
But $2^3\cdot 3+1=5^2$ is not prime.
Hence no Carmichael number exists in this case.

\subsubsection{Subcase 4: $l_1=s_1=4$}
When $l_1=s_1=4$, we have either $P=3$ or $P\equiv 1 \pmod{3}$.
As $2^4\cdot 3+1=7^2$ is not prime, $P\neq 3$.
There are three values for $P\equiv 1 \pmod{3}$ with $k_1=8$: $P=7, 13, 241$.
However, $2^4\cdot 7^2+1=5\cdot 157$, $2^4\cdot 13+1=11\cdot 19$ and $2^4\cdot 241+1=7\cdot 19\cdot 29$ are not prime.
Hence there is no possible $P$ in this case.

\subsubsection{Subcase 4: $l_1=s_1=5$}
Consider the case $l_1=s_1=5$.
As $s_1$ is odd, $P=3$. However $2^5\cdot 3+1=17^2$ is not prime. Hence we have no Carmichael number in this case.

\subsubsection{Subcase 4: $l_1=s_1=6$}
Suppose $l_1=s_1=6$.
As $l_1$ is even, we have either $P=3$ or $P\equiv 1 \pmod{3}$.

\paragraph{When $P=3$}
Suppose that $P=3$. Then $q_1=2^6\cdot 3+1=193$ and $r_1=2^6\cdot 3^2+1=577$.
Recall that $2^{16}\mid L$.
Then we have
\begin{align*}
m&=257\cdot 65537 \cdot 193 \cdot 577 \prod_{i \geq 1}u_i=(2^{10}x+1) \cdot (2^{16}+1) \prod_{i \geq 1}u_i
\end{align*}
There is no $n$-pair with $6<n<10$ and there is no $10$-prime (See Table \ref{table:P=3}).
Hence no Carmichael number exists in this case by the minimality argument.

\paragraph{When $P\equiv 1 \pmod{3}$}
Consider the case $P\equiv 1 \pmod{3}$.
Then $P=7, 13, 241$. However $2^6\cdot 13+1=7^2\cdot 17$ and $2^6 \cdot 241+1=5^2\cdot 617$ are not prime.

When $P=7$, we have $q_1=449$ and $r_1=3137$.
Then
\begin{align*}
m&=257\cdot 65537 \cdot 449 \cdot 3137 \prod_{i \geq 1}u_i=(2^{8}x+1) \prod_{i \geq 1}u_i
\end{align*}
As there is no $n$-pair with $6<n<8$ and $8$-prime (see Table \ref{table:P=7}) , there is no Carmichael number in this case.

\subsubsection{Subcase 4: $l_1=s_1=7$}
If $l_1=s_1=7$, then $P=3$ as $s_1$ is odd.
However, $2^7\cdot 3 +1=5 \cdot 7 \cdot 11$ is not prime.
Thus, no Carmichael number exists in this case by the minimality argument.

This exhausted all the cases, and this completes the proof of Theorem \ref{thm:main theorem}.


\end{document}